\newtheorem{theorem}{Theorem}
\newtheorem{definition}{Definition}
\newtheorem{lemma}{Lemma}
\newtheorem{cor}{Corollary}
\newtheorem{proposition}{Proposition}
\newtheorem{question}{The question}
\newtheorem{remark}{Remark}
\def \beq{ \begin{equation}}
\def \eeq{\end{equation}}
\title{Three body relative equilibria on $\mathbb{S}^2$}
\date{}  
\begin{document}
	\maketitle
	\author{\begin{center}
	{ Toshiaki~Fujiwara$^1$, Ernesto P\'{e}rez-Chavela$^2$}\\	
		\bigskip
	   $^1$College of Liberal Arts and Sciences, Kitasato University,       Japan. fujiwara@kitasato-u.ac.jp\\
	    $^2$Department of Mathematics, ITAM, M\'exico.\\ ernesto.perez@itam.mx
	\end{center}
	
\date{}

\bigskip

\begin{abstract}
We study relative equilibria
($RE$ in short) 
for three-body problem
on $\mathbb{S}^2$,
under the influence of a general potential which only depends on 
$\cos\sigma_{ij}$ where $\sigma_{ij}$ are the mutual angles
among the masses.
Explicit conditions for 
masses $m_k$ and  $\cos\sigma_{ij}$
to form relative equilibrium are shown.
Using the above conditions,
we study  the equal masses case
under the cotangent potential. 
We show the existence of scalene and isosceles Euler
$RE$, and isosceles 
and equilateral Lagrange $RE$.
\end{abstract}

{\bf Keywords} Relative equilibria, Euler configurations, Lagrange configurations, the inertia tensor, cotangent potential.

{\bf Math. Subject Class 2020:} 70F07, 70F10, 70F15}

\section{Introduction}
Consider a point $q=(X,Y,Z)^T$ 
on $\mathbb{S}^2$, $|q|^2=R^2$.
Here, ${}^T$ represents the transpose.
In this paper, when we use the matrix notation, 
any vector is represented as a column vector.
For two vectors $a$ and $b$,
$(a\cdot b)=a^T b$ and  $a\times b$
represents the inner and the vector product respectively.
In spherical coordinates $(R,\theta,\phi)$, 
$q$ is represented by
$q=R(\sin\theta\cos\phi,\sin\theta\sin\phi,\cos\theta)^T$.
The arc angle $\sigma_{ij} \in (0,\pi)$
is  the angle between the two points $i$ and $j$
as seen from the center of $\mathbb{S}^2$.
Then, $(q_i\cdot q_j)=R^2\cos\sigma_{ij}$.
Using spherical coordinates,
$\cos\sigma_{ij}$ is given by
\begin{equation}
\label{fundamentalrelation}
\cos\sigma_{ij}
=\cos\theta_i\cos\theta_j+\sin\theta_i\sin\theta_j\cos(\phi_i-\phi_j).
\end{equation}

The $n$-body problem on $S^2$ is defined by the
Lagrangian, 
\begin{equation}\label{theLagrangian}
L=K+V
\end{equation}
 with
\begin{equation*}
K=R^2\sum\nolimits_k\frac{m_k}{2}
		\left(\dot{\theta}_k^2+\sin^2(\theta_k)\dot{\phi}_k^2\right),
\quad
V=\sum\nolimits_{i<j}\frac{m_i m_j}{R} U(\cos\sigma_{ij}).
\end{equation*}
Dot on symbols represents the time derivative,
the indexes $i,j,k$ run for $1,2,3,\dots, n$,
and $m_k$ are the masses.
The equations of motion are derived from the Lagrangian.

A relative equilibrium 
($RE$)
on the Euclidean plane, is a solution of a system of $n$--positive point masses, where each mass is rotating uniformly around the center of mass of the system  with the same angular velocity.  
The mutual distances among the masses remain constant along the motion. The masses behave as if they belong to a rigid body. For the classical planar
Newtonian $3$--body problem, it is well known that there are five classes of relative equilibria, three collinear or Euler $RE$ and two equilateral triangle or Lagrange $RE$ \cite{Euler, Moeckel}. 
The configuration
of the masses in a $RE$ is called a {\it central configuration}, we obtain the respective relative equilibrium by taking a particular uniform rotation through the center of mass \cite{Wintner}.

Similarly,
a relative equilibrium
on $\mathbb{S}^2$
is a solution of the equations of motion 
where each mass is rotating uniformly
around an axis, with the same angular velocity, 
 like a rigid body.

\begin{definition}[Relative equilibrium]\label{def1}
A relative equilibrium on $\mathbb{S}^2$ is a solution 
of the equations of motion given by the Lagrangian \eqref{theLagrangian},
which satisfies $\dot q_k=\Omega \times q_k$ for $k=1,2,\dots, n$ where $\Omega$ is a constant vector in $\mathbb{R}^3$.
\end{definition}

The constant vector $\Omega$ is called {\it the angular velocity}.
It is obvious from the definition that
$|q_k|$, 
$(q_i\cdot q_j)$, $|q_i-q_j|$ 
and $(q_k \cdot \Omega)$ are constant in time.

Unlike to the Euclidean plane, the $RE$ on $\mathbb{S}^2$ contain the following special solution.

\begin{definition}[Fixed point]\label{fixed-point}
A fixed point is a $RE$
with $\dot q=0$.
\end{definition}

\begin{definition}[Collinear and non-collinear]\label{coll-and non}
A collinear $RE$ 
is a $RE$ where all $n$--bodies are on the same geodesic. If this is not the case, we call it non-collinear $RE$.
For $n=3$, a collinear $RE$ is called 
Euler $RE$ ($ERE$), the non collinear $RE$ are called 
Lagrange $RE$ ($LRE$).
\end{definition}
 
Our main goal in this paper
is to give a systematic method
to find $RE$ on $\mathbb{S}^2$ for $n=3$.
More specifically,
we will give a clear answer to the following question.
\begin{question}\label{question}
For given positive masses $m_1,m_2,m_3$ and given arc angles $\sigma_{12}, \sigma_{23}, \sigma_{31}$, can this shape be a $RE$?
If so, what is the appropriate $\Omega$?
\end{question}

In the Euclidean planar $n$-body problem,
the rotation center for a $RE$ 
is obviously the center of mass.
But there are no center of mass 
first integral on $\mathbb{S}^2$
(see for instance \cite{Borisov1, Borisov2, Diacu-EPC1, Diacu1, Diacu3, EPC1, Shchepetilov2} and the references therein).

Fortunately
we can apply the knowledge for the rigid body,
since $RE$ does not change its shape
as if it were a rigid body.
In the rigid body problem
\cite{Goldstein, Hestenes, LandauLifshitz, Marsden, Routh},
it is well known that if the rotation axis is constant in time then the rotation axis is 
one of the principal axes 
(eigenvectors) of the inertia tensor.
Routh called such a rotation axis
``a permanent axis of rotation'' \cite{Routh}.

Starting from this well known fact,
we will give a complete answer to the above question.
In this paper we will show:
\begin{itemize}
\item Explicit equations (necessary and sufficient 
conditions) 
for $m_k$ and $\cos\sigma_{ij}$ to form a $RE$.
\item The suitable $\Omega$ is determined for the solution 
of the above condition.
Explicit expression for $\cos\theta_k$ is given.
	(The set $\cos\sigma_{ij}$ and $\cos\theta_k$ determine
	$\cos(\phi_i-\phi_j)$.
	Therefore, the configuration $\{\theta_k,\phi_i-\phi_j\}$ 
	of a $RE$ is determined.)
\end{itemize}

After the introduction, the paper is organized as follows: In Section \ref{prelimsCI}
we obtain the expression for the angular momentum,
and the inertia tensor for $RE$.
In this section, we show a special expression of inertia tensor $J$ 
for the three-body problem,
which will turn out to be useful
to investigate three-body $LRE$.
In Section \ref{secEqOfMotion}
we get the equations of motion for $n$ particles with positive masses moving on $\mathbb{S}^2$ in spherical coordinates. 
The Sections \ref{sec:Euler} and \ref{sec:Lagrange} are devoted to give necessary and sufficient conditions to form
$ERE$ and  $LRE$ on $\mathbb{S}^2$
respectively. In both cases we give a clear answer to the question \ref{question}.

In order to have concrete examples to show how  
the conditions work,
in Section \ref{sec:curved}, 
we study the
 three-body problem on $\mathbb{S}^2$ with equal masses moving
under the influence of the cotangent potential
(see for instance \cite{Borisov2, Diacu-EPC1, Diacu1}).
We show some new families of $ERE$ and $LRE$ in this problem. 
Finally in Section \ref{conclusions}, we summarize our results and state some final remarks.
 

\section{Angular momentum, and the inertia tensor}\label{prelimsCI}
From here on, for simplicity we take $R=1$. 

\subsection{For the $n$-body problem on $\mathbb{S}^2$}
Since the Lagrangian \eqref{theLagrangian} is  invariant under the action of the group $SO(3)$, 
the angular momentum 
$c
= (c_x, c_y, c_z)^T =\sum_\ell m_\ell q_\ell \times \dot q_k$
is a first integral \cite{Diacu-EPC1, Diacu1}.
For $RE$,
\begin{equation}
c
=\sum\nolimits_\ell m_\ell q_\ell \times (\Omega\times q_\ell)
=\sum\nolimits_\ell m_\ell 
	\left(\Omega-(q_\ell\cdot \Omega)q_\ell\right)
=I\Omega,
\end{equation}
where $I$ is the inertia tensor defined by
\begin{equation}
I=M-\sum\nolimits_\ell m_\ell q_\ell q_\ell^T,
\end{equation}
and $M=\sum_\ell m_\ell$ is the total mass.

In the matrix form,
$q_\ell q_\ell^T$ is a 3 by 3 matrix,
actually,
applying the usual matrix product rule
for two vectors $a=(a_1,a_2,a_3)^T$ and $b=(b_1,b_2,b_3)^T$,
the product of column vector $a$ and row vector $b^T$ in this 
order yields
\begin{equation}
ab^T
=\left(\begin{array}{c}
a_1 \\a_2 \\a_3\end{array}
\right)(b_1, b_2,b_3)
=\left(\begin{array}{ccc}
a_1b_1 & a_1b_2 & a_1b_3 \\
a_2b_1 & a_2b_2 & a_2b_3 \\
a_3b_1 & a_3b_2 & a_3b_3
\end{array}\right).
\end{equation}
Since $a^Tb=(a_1b_1+a_2b_2+a_3b_3)=(a\cdot b)$, we have that 
for any vectors $a,b,c$,
$(ab^T)c=a(b^Tc)=a(b\cdot c)$ and $c^T(ab^T)=(c\cdot a)b^T$.
Sometimes, $ab^T$ is called ``dyadic''.
In the matrix addition or subtraction, $M$ should be understood
to be $ME$, where $E$ is the 3 by 3 identity matrix,
as in the usual convention.

Now, for $\Omega\ne 0$ we obtain
\begin{equation}\label{cForOmegaDirection}
(\Omega\cdot c)
=\Omega^T I \Omega
=M|\Omega|^2-\sum\nolimits_\ell m_\ell (q_\ell\cdot \Omega)^2
=\sum\nolimits_\ell m_\ell |q_\ell \times \Omega|^2>0.
\end{equation}
Because if it is zero, then any mass should be at
either one of the two points $\pm \Omega/|\Omega|$.
For $n\ge 3$, 
there are at least one pair with $\sigma_{ij}=0$,
which we exclude.
The conservation of $c$ yields
\begin{equation}\label{cForPerpendicularToOmegaDirection}
\Omega \times c
=-\sum\nolimits_\ell m_\ell \Omega \times q_\ell (q_\ell\cdot \Omega)
=-\sum\nolimits_\ell m_\ell \dot q_\ell(q_\ell\cdot \Omega)
=\dot c
=0.
\end{equation}
The equation
\eqref{cForPerpendicularToOmegaDirection}
concludes that $c=I\Omega$ and $\Omega$ are parallel.
Namely, $\Omega$ is one of the eigenvectors of $I$,
\begin{equation}\label{theEigenvalueI}
I\Omega=\lambda \Omega.
\end{equation}
This is the well known result stated in the introduction.

\begin{lemma}\label{rotationAxis}
The rotation axis for $n$-body $RE$ on $\mathbb{R}^2$
is one of the eigenvectors (principal axis) of the inertia tensor
$I$.
\end{lemma}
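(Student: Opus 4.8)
The plan is to exploit the conservation of angular momentum together with the defining rigid-rotation property of a relative equilibrium. First I would insert $\dot q_\ell=\Omega\times q_\ell$ into the definition $c=\sum_\ell m_\ell q_\ell\times\dot q_\ell$ and apply the vector triple product identity $q_\ell\times(\Omega\times q_\ell)=(q_\ell\cdot q_\ell)\Omega-(q_\ell\cdot\Omega)q_\ell=\Omega-(q_\ell\cdot\Omega)q_\ell$, using $|q_\ell|=1$. Summing over $\ell$ and collecting terms yields $c=\left(M-\sum_\ell m_\ell q_\ell q_\ell^T\right)\Omega=I\Omega$, exactly the identity already recorded in \eqref{cForOmegaDirection}. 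This expresses the conserved angular momentum as the inertia tensor applied to the angular velocity.

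Second, I would use the fact that $c$ is a first integral, so $\dot c=0$, and compare this with a direct time derivative of $c=I\Omega$. Since $\Omega$ is constant, the only time dependence sits in the dyadics $q_\ell q_\ell^T$, and differentiating gives $\dot c=-\sum_\ell m_\ell\big[(\dot q_\ell\cdot\Omega)q_\ell+(q_\ell\cdot\Omega)\dot q_\ell\big]$. The crucial observation, and the step I expect to carry the real content, is that $\dot q_\ell\cdot\Omega=(\Omega\times q_\ell)\cdot\Omega=0$, so the first term drops out. What survives is $\dot c=-\sum_\ell m_\ell(q_\ell\cdot\Omega)\,\Omega\times q_\ell=\Omega\times c$, which is precisely \eqref{cForPerpendicularToOmegaDirection}.

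Finally, combining $\dot c=0$ with $\dot c=\Omega\times c$ forces $\Omega\times c=0$, so $c$ is parallel to $\Omega$ (here I need $\Omega\ne 0$, i.e.\ a genuine rotation rather than a fixed point). Writing $c=\lambda\Omega$ and recalling $c=I\Omega$ then gives $I\Omega=\lambda\Omega$, identifying $\Omega$ as an eigenvector of the symmetric inertia tensor $I$, which is the assertion of the lemma.

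The only genuinely delicate point is the cancellation of the $(\dot q_\ell\cdot\Omega)q_\ell$ term, equivalently the rigid-body fact that a vector frozen in the uniformly rotating configuration evolves by $\dot c=\Omega\times c$; once this is in hand the conclusion is immediate. I should also dispose of the degenerate case $\Omega=0$ separately, where the statement holds vacuously since a fixed point imposes no constraint on the axis, and note that $(\Omega\cdot c)=\sum_\ell m_\ell|q_\ell\times\Omega|^2>0$ from \eqref{cForOmegaDirection} guarantees $c\ne 0$ whenever the masses do not all sit at the poles $\pm\Omega/|\Omega|$, so in fact $\lambda>0$.
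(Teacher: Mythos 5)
Your proposal is correct and follows essentially the same route as the paper: both derive $c=I\Omega$ from the triple product identity, use the constancy of $(q_\ell\cdot\Omega)$ (equivalently $\dot q_\ell\cdot\Omega=0$) to identify $\Omega\times c$ with $\dot c$, and conclude from conservation of angular momentum that $\Omega\times c=0$, hence $I\Omega=\lambda\Omega$. The only difference is cosmetic — you compute $\dot c$ and recognize it as $\Omega\times c$, while the paper computes $\Omega\times c$ and recognizes it as $\dot c$ — and your explicit treatment of the degenerate case $\Omega=0$ and of the positivity $(\Omega\cdot c)>0$ matches the paper's remarks around \eqref{cForOmegaDirection}.
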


We take the $z$ direction as the direction of $\Omega$,
$\Omega=\omega e_z$, where $\omega>0$ and $e_z$ is the
unit vector on the $z$ direction.
The other mutually orthogonal directions to $e_z$ are $e_x$, $e_y$.
Then, we get

\begin{equation}
Ie_z=\lambda e_z,
\end{equation}
with the eigenvalue
\begin{equation}
\lambda=e_z^T I e_z=\sum_\ell m_\ell (1-(q_\ell\cdot e_z)^2)
=\sum_\ell m_\ell \sin^2(\theta_\ell),
\end{equation}
and
\begin{equation}\label{defIxzIyz}
(I_{xz}, I_{yz})=(e_x^T I e_z, e_y^T I e_z)
=-\sum_\ell m_\ell \sin\theta_\ell \cos\theta_\ell (\cos\phi_\ell,\sin\phi_\ell)
=0.
\end{equation}
In the right hand side, we have used spherical coordinates,
and the fact that, $\cos\theta_k=(q_k\cdot e_z)$.

\begin{lemma}\label{IxzIyzEq0}
If $e_z$ is one of the eigenvectors of the inertia tensor,
then $I_{xz}=I_{yz}=0$.
The inverse is also true.
\end{lemma}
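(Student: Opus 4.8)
The plan is to prove both implications by a direct computation in the orthonormal frame $\{e_x,e_y,e_z\}$, using only that $I$ is symmetric. Symmetry is immediate: since $I=ME-\sum_\ell m_\ell q_\ell q_\ell^T$ and each dyadic $q_\ell q_\ell^T$ equals its own transpose, $I^T=I$, so in particular $I_{zx}=I_{xz}$ and $I_{zy}=I_{yz}$. The single observation driving everything is that any vector $v\in\mathbb{R}^3$ may be expanded as $v=(e_x\cdot v)e_x+(e_y\cdot v)e_y+(e_z\cdot v)e_z$. Applying this to $v=Ie_z$ gives
\[
Ie_z = (e_x^T I e_z)\,e_x + (e_y^T I e_z)\,e_y + (e_z^T I e_z)\,e_z = I_{xz}\,e_x + I_{yz}\,e_y + \lambda\,e_z,
\]
where I write $\lambda=e_z^T I e_z=I_{zz}$, exactly the eigenvalue already computed in the excerpt.

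For the forward implication, I would suppose $e_z$ is an eigenvector, $Ie_z=\lambda e_z$. Then $I_{xz}=e_x^T I e_z=\lambda(e_x\cdot e_z)=0$ and likewise $I_{yz}=\lambda(e_y\cdot e_z)=0$, because $e_x$ and $e_y$ are orthogonal to $e_z$. This is precisely the content of \eqref{defIxzIyz}, read in the eigenvector direction.

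For the converse, I would assume $I_{xz}=I_{yz}=0$ and substitute into the displayed expansion of $Ie_z$; the first two terms collapse, leaving $Ie_z=\lambda e_z$, so $e_z$ is an eigenvector of $I$ with eigenvalue $\lambda=I_{zz}$. Here the symmetry of $I$ is what makes the two listed components the only relevant ones: I do not need to impose $I_{zx}=I_{zy}=0$ separately, since they vanish automatically. There is no genuine obstacle in this lemma; the only point worth flagging is the reliance on orthonormality of the frame, which simultaneously guarantees the clean expansion of $Ie_z$ and the vanishing of $e_x\cdot e_z$ and $e_y\cdot e_z$.
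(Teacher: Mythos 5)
Your argument is correct and is essentially the paper's own proof, which rests on the same single observation that $Ie_z=I_{xz}e_x+I_{yz}e_y+I_{zz}e_z$ in the orthonormal frame; both directions then follow immediately. (The appeal to symmetry of $I$ is not actually needed, since the expansion of the vector $Ie_z$ already involves only the three components you list.)
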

\begin{proof}
It is obvious, because $I e_z=I_{xz}e_x+I_{yz}e_y+I_{zz}e_z$.
\end{proof}

The following alternative expression for $I_{xz}=I_{yz}=0$ is useful
to study $RE$,
\begin{equation}\label{alternative}
\sum_{i=1,2,\dots n} 
m_i \sin(\theta_i)\cos(\theta_i) \left(\cos(\phi_k - \phi_i),\sin(\phi_k - \phi_i)\right) = 0,
\end{equation}
for each $k=1,2,\dots n$,
because all the terms in the right hand side are constant in time for $RE$.

\begin{remark}
The inequality 
$\lambda=\sum_\ell m_\ell \sin^2\theta_\ell \le M$
is satisfied. The equality holds if and only if
$\theta_\ell=\pi/2$ for all $\ell$,
namely, if all the masses are on the equator.
Therefore, the eigenvalue for non-collinear distribution of mass
satisfies $\lambda < M$.
\end{remark}

\begin{remark}\label{remarkForThreeEigenvectorsI}
Since the inertia tensor $I$ is an orthogonal 3 by 3 matrix,
there are three eigenvectors.
The above Lemma and the Remark hold for all three eigenvectors.
Let them be $e_\alpha$ $(|e_\alpha|=1)$ with corresponding eigenvalue $\lambda_\alpha$
for $\alpha=1,2,3$.
Then $I \tilde e=\tilde e\Lambda$,
where $\tilde e=(e_1,e_2,e_3)$ 
is the 3 by 3 orthogonal matrix
aligning the three eigenvectors $e_\alpha$,
and $\Lambda$ is the diagonal matrix of the eigenvalues
$\Lambda_{\alpha\alpha}=\lambda_\alpha$.

Let be $(q_\ell \cdot e_\alpha)=\cos\theta_{\ell\alpha}$,
$\ell=1,2,\dots,n$ and $\alpha=1,2,3$.
Then by the definition,
$(\cos\theta_{\ell 1},\cos\theta_{\ell 2},\cos\theta_{\ell 3})$
are the direction cosines for the position of the mass $\ell$ with respect to the orthogonal coordinates $e_1,e_2,e_3$,
and $\sum_\alpha \cos^2(\theta_{\ell\alpha})=1$.

The inequality $\lambda_\alpha<M$ is satisfied for all 
$\alpha=1,2,3$ for non-collinear distribution of masses.
\end{remark}

For collinear $RE$,
the definition of $I$ and the fact that $I_{xz}=I_{yz}=0$
is enough to investigate $RE$.
However, to investigate non-collinear $RE$, 
a deeper understanding of what $I_{xz}=I_{yz}=0$  
is required. We will do it in the next subsection.

\subsection{For the three-body problem on $\mathbb{S}^2$}\label{prelimsEqOfMotion}
In this subsection,
we consider the three body problem.
We don't fix the $z$-axis first.
Instead, we consider a  method to find
the eigenvectors of the inertia tensor
for given masses $m_k$ 
and arc angles $\sigma_{ij}$.
We don't restrict ourself to $RE$.
We consider general $m_k$ and $\sigma_{ij}$
with inequalities for the three sides 
$\sigma_{ij}$ to form a triangle,
$\sigma_{ij}\le \sigma_{jk}+\sigma_{ki}$
for $(i,j,k)=(1,2,3),(2,3,1),(3,1,2)$,
and $\sigma_{12}+\sigma_{23}+\sigma_{31}\le 2\pi$.

From now on, 
the  cyclic expression of 
$(i,j,k)=(1,2,3),(2,3,1),(3,1,2)$
will be denoted by
$(i,j,k) \in cr(1,2,3)$.
Also, we simply write $\{m_k,\cos\sigma_{ij}\}$
for the set 
$\{m_1,m_2,m_3,\cos\sigma_{12},\cos\sigma_{23},\cos\sigma_{31}\}$.

We found that the following 
matrix $J$ is similar to $I$ and is very useful,
\begin{equation}\label{defJ}
J=\left(\begin{array}{ccc}
m_2+m_3 & -\sqrt{m_1m_2}\cos\sigma_{12} & -\sqrt{m_1m_3}\cos\sigma_{13} \\
-\sqrt{m_2m_1}\cos\sigma_{21} &m_3+m_1& -\sqrt{m_2m_3}\cos\sigma_{23} \\
-\sqrt{m_3m_1}\cos\sigma_{31} & -\sqrt{m_3m_2}\cos\sigma_{32} & m_1+m_2
\end{array}\right).
\end{equation}

Or, equivalently,
\begin{equation}\label{defIByVectors}
J_{ij}=M\delta_{ij}-\sqrt{m_im_j}
	(q_i\cdot q_j),
\end{equation}
where $\delta_{ij}$ is the Kronecker delta.
The similarity will be proved ahead in the paper.
Before that, we investigate the properties of $J$.

\begin{theorem}\label{threeEquivalentStatements}
Assume that the $z$--axis is one of the eigenvectors of $I$,
we define $\cos\theta_k=(q_k\cdot e_z)$ as above.
Then,
\begin{equation}\label{defPsi}
\Psi
=(\sqrt{m_1}\cos\theta_1,\sqrt{m_2}\cos\theta_2,\sqrt{m_3}\cos\theta_3)^T/\sqrt{\sum\nolimits_{\ell=1,2,3}m_\ell \cos^2(\theta_\ell)}
\end{equation}
is the eigenvector of $J$
that 
belongs to the eigenvalue
$\lambda=\sum_{\ell=1,2,3}m_\ell \sin^2(\theta_\ell)$.
\end{theorem}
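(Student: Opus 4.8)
The plan is to verify directly that $J\psi=\lambda\psi$, where $\psi=(\sqrt{m_1}\cos\theta_1,\sqrt{m_2}\cos\theta_2,\sqrt{m_3}\cos\theta_3)^T$ is the unnormalized version of $\Psi$. Since the scalar factor $1/\sqrt{\sum_\ell m_\ell\cos^2\theta_\ell}$ does not affect whether a vector is an eigenvector, it suffices to treat $\psi$; the normalization only serves to make $\Psi$ a unit vector.

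First I would compute the $i$-th component of $J\psi$ using the vector form of the entries given in \eqref{defIByVectors}:
\begin{equation*}
(J\psi)_i=\sum_{j}\big(M\delta_{ij}-\sqrt{m_im_j}(q_i\cdot q_j)\big)\sqrt{m_j}\cos\theta_j=M\sqrt{m_i}\cos\theta_i-\sqrt{m_i}\sum_{j}m_j(q_i\cdot q_j)\cos\theta_j.
\end{equation*}
The crucial step is to recognize the inner sum as an application of the dyadic part of the inertia tensor to $e_z$. Since $\cos\theta_j=(q_j\cdot e_z)$ and $\sum_j m_j q_jq_j^T=ME-I$ by the definition of $I$, I would write
\begin{equation*}
\sum_{j}m_j(q_i\cdot q_j)\cos\theta_j=q_i^T\Big(\sum_j m_j q_j q_j^T\Big)e_z=q_i^T(ME-I)e_z.
\end{equation*}

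By hypothesis $e_z$ is an eigenvector of $I$, and its eigenvalue was computed earlier to be exactly $\lambda=\sum_\ell m_\ell\sin^2\theta_\ell$, so $(ME-I)e_z=(M-\lambda)e_z$. Hence the inner sum collapses to $(M-\lambda)(q_i\cdot e_z)=(M-\lambda)\cos\theta_i$. Substituting back gives
\begin{equation*}
(J\psi)_i=M\sqrt{m_i}\cos\theta_i-\sqrt{m_i}(M-\lambda)\cos\theta_i=\lambda\sqrt{m_i}\cos\theta_i=\lambda\psi_i,
\end{equation*}
which is precisely the claimed eigenvalue equation.

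I do not expect a genuine obstacle here: the computation is short once one sees that the weighted sum over $j$ should be bundled into the matrix $ME-I$ acting on $e_z$. The only point requiring care is to invoke the \emph{exact} eigenvalue $\lambda=\sum_\ell m_\ell\sin^2\theta_\ell$ --- the same value that appears in the statement --- rather than an unspecified eigenvalue; it is this specific value that makes the two subtracted $M$-terms cancel and leaves exactly $\lambda$ as the multiplier.
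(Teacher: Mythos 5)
Your proof is correct and follows essentially the same route as the paper: both reduce $(J\psi)_i$ to $\sqrt{m_i}\,q_i^T I e_z$ (you just split off the $M$ term and recombine via $\sum_j m_j q_j q_j^T = ME - I$, while the paper keeps the bracket intact) and then invoke $Ie_z=\lambda e_z$ with $\lambda=\sum_\ell m_\ell\sin^2\theta_\ell$. No gaps.
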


\begin{proof}
Neglecting the normalization factor, 
let us prove $Jv=\lambda v$ for $v_i=\sqrt{m_i}(q_i\cdot e_z)$.
Just by the definitions, we obtain
\begin{equation}\label{Jv1}
\begin{split}
(Jv)_i
&=\sum\nolimits_\ell (M\delta_{i\ell}-\sqrt{m_i m_\ell}(q_i\cdot q_\ell))
	\sqrt{m_\ell}(q_\ell \cdot e_z)\\
&=\sqrt{m_i} q_i^T\left(ME - \sum\nolimits_\ell m_\ell q_\ell q_\ell^T\right)e_z\\
&=\sqrt{m_i} q_i^T I e_z.
\end{split}
\end{equation}
By the assumption of this theorem, $Ie_z=\lambda e_z$,
the right hand side is
$\lambda v_i$.
\end{proof}

Note that the eigenvalue $\lambda$ is the same as for $I$.
So, we have the following lemma.
\begin{lemma}\label{ForThreeEigenvectorsII}
The inertia tensor $I$ and the matrix $J$ are similar.
\end{lemma}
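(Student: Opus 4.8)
The plan is to exhibit $I$ and $J$ as two symmetric matrices built from a common rectangular factor, and then invoke the classical fact that $AB$ and $BA$ share their characteristic polynomial. Collect the three (unnormalised) position vectors into the single $3\times 3$ matrix
\begin{equation*}
G=\left(\sqrt{m_1}\,q_1,\ \sqrt{m_2}\,q_2,\ \sqrt{m_3}\,q_3\right),
\end{equation*}
whose columns are the mass-weighted positions. Then $GG^T=\sum_\ell m_\ell q_\ell q_\ell^T$ and $(G^TG)_{ij}=\sqrt{m_im_j}\,(q_i\cdot q_j)$, so that by the definition of $I$ and of $J$ in \eqref{defIByVectors} we obtain the clean factorisations $I=ME-GG^T$ and $J=ME-G^TG$. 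The crucial point, special to $n=3$, is that $G$ is \emph{square}, so both $GG^T$ and $G^TG$ are $3\times 3$ matrices.

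First I would apply the standard identity that, for square matrices $A$ and $B$, the products $AB$ and $BA$ have the same characteristic polynomial. Taking $A=G$ and $B=G^T$ gives $\det(\mu E-GG^T)=\det(\mu E-G^TG)$, so $GG^T$ and $G^TG$ share their eigenvalues with the same multiplicities. Writing these eigenvalues as $\mu_1,\mu_2,\mu_3$, the eigenvalues of $I=ME-GG^T$ are the $M-\mu_\alpha$, and those of $J=ME-G^TG$ are again the $M-\mu_\alpha$; hence $I$ and $J$ have identical spectra, multiplicities included. Since $I$ and $J$ are both real symmetric, each is orthogonally diagonalisable, so both are similar to the common diagonal matrix $\Lambda=\mathrm{diag}(M-\mu_1,M-\mu_2,M-\mu_3)$, and therefore similar to each other. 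This argument is uniform and needs no case split.

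An alternative, more hands-on route is to construct the similarity explicitly from Theorem~\ref{threeEquivalentStatements}: applying its proof to each of the three orthonormal eigenvectors $e_\alpha$ of $I$ (Remark~\ref{remarkForThreeEigenvectorsI}) produces, for each $\alpha$, an eigenvector $v^{(\alpha)}$ of $J$ with components $v^{(\alpha)}_i=\sqrt{m_i}\,(q_i\cdot e_\alpha)$ and eigenvalue $\lambda_\alpha$. Collecting the $v^{(\alpha)}$ as columns of a matrix $P$ gives $JP=P\Lambda$, and a short computation shows $P^TP=ME-\Lambda=\mathrm{diag}(M-\lambda_\alpha)$. The main obstacle lies precisely here: to conclude $J=P\Lambda P^{-1}$ one needs $P$ invertible, i.e.\ $\lambda_\alpha<M$ for every $\alpha$, which holds for non-collinear configurations by Remark~\ref{remarkForThreeEigenvectorsI} but fails in the collinear case, where the axis normal to the common geodesic yields $\lambda=M$ and $P$ becomes singular. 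This degeneracy is exactly what the $AB$/$BA$ argument of the previous paragraph bypasses, which is why I would present that route as the main proof and reserve the eigenvector construction as corroboration.
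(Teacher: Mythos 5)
Your proposal is correct, and your main argument is genuinely different from the paper's. The paper proves the lemma by taking the three orthonormal eigenvectors $e_\alpha$ of $I$, feeding each through Theorem~\ref{threeEquivalentStatements} to get a normalised eigenvector $\Psi_\alpha$ of $J$ with the same eigenvalue $\lambda_\alpha$, and then verifying $(\Psi_\alpha\cdot\Psi_\beta)=(n_\alpha n_\beta)^{-1}e_\alpha^T(M-I)e_\beta=\delta_{\alpha\beta}$, so that $\widetilde\Psi=(\Psi_1,\Psi_2,\Psi_3)$ is orthogonal and $J\widetilde\Psi=\widetilde\Psi\Lambda$ while $I\tilde e=\tilde e\Lambda$ --- this is essentially your ``alternative, more hands-on route,'' with $P=G^T\tilde e$ and the normalisation $n_\alpha=\sqrt{M-\lambda_\alpha}$ folded in. Your primary route instead factors $I=ME-GG^T$ and $J=ME-G^TG$ and invokes the fact that $GG^T$ and $G^TG$ share a characteristic polynomial, together with symmetry of both matrices. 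What your route buys is robustness: you correctly observe that the eigenvector construction requires $M-\lambda_\alpha>0$ to normalise (equivalently, to invert $P$), and this fails for the eigenvector normal to the common geodesic in a collinear configuration, where $n_\alpha=0$ and $\Psi_\alpha$ in \eqref{defPsi} is a $0/0$ expression; the paper's proof silently carries this same restriction (its orthonormality computation gives $(\Psi_\alpha\cdot\Psi_\alpha)=n_\alpha^{-2}(M-\lambda_\alpha)$, which is $1$ only when $M-\lambda_\alpha\ne 0$). What the paper's route buys, and what your spectral argument does not by itself provide, is the explicit correspondence $e_\alpha\leftrightarrow\Psi_\alpha$ that is used repeatedly afterwards (Remark~\ref{correspondenceEamdPsi}, Theorem~\ref{PsitoEz}, Lemma~\ref{cosFromPsi}); since you retain that construction as corroboration, nothing is lost.
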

\begin{proof}
There are three eigenvectors and eigenvalues for $I$.
Let them be $e_\alpha$ and $\lambda_\alpha$, $\alpha=1,2,3$.
Namely, $I e_\alpha=\lambda_\alpha e_\alpha$.
Theorem \ref{threeEquivalentStatements} shows that
there are corresponding eigenvector $\Psi_\alpha$ for $J$
with the same $\lambda_\alpha$.

If the eigenvalues are not degenerated,
the orthogonality $(\Psi_\alpha\cdot\Psi_\beta)=\delta_{\alpha\beta}$
is obvious.
But the orthogonality is true even for the degenerated case.

For $i=1,2,3$, we denote to the component $i$ of $\Psi_\alpha$ as 
$\Psi_{i\alpha}$ which is defined by \eqref{defPsi}.
Namely $\Psi_{i\alpha}=\sqrt{m_i}(q_i\cdot e_\alpha)/n_\alpha$,
$n_\alpha=\sqrt{\sum_\ell m_\ell (q_\ell\cdot e_\alpha)^2}$.
Then 
\[
\begin{split}
(\Psi_\alpha\cdot\Psi_\beta)
&=(n_\alpha n_\beta)^{-1}
\sum\nolimits_\ell m_\ell (q_\ell\cdot e_\alpha)(q_\ell\cdot e_\beta)\\
&=(n_\alpha n_\beta)^{-1}e_\alpha^T(M-I)e_\beta\\
&=(n_\alpha n_\beta)^{-1}(M-\lambda_\beta)\delta_{\alpha\beta}
=\delta_{\alpha\beta},
\end{split}
\]
because $e_\beta$ is the eigenvector of $I$ with the eigenvalue 
$\lambda_\beta$.

Therefore, 
the matrix $\widetilde\Psi=(\Psi_1,\Psi_2,\Psi_3)$
that is the alignment of the three column vectors $\Psi_\alpha$
is an orthogonal matrix, with $J\widetilde\Psi=\widetilde\Psi\Lambda$.
On the other hand, as stated in the Remark \ref{remarkForThreeEigenvectorsI},
$I\tilde e=\tilde e \Lambda$,
with the same $\Lambda$.
Therefore, $I$ and $J$ are similar.
Namely, $J=\widetilde\Psi\tilde e^T I e\widetilde\Psi^T$.
\end{proof}

\begin{remark}\label{correspondenceEamdPsi}
The correspondence between 
$e_\alpha$ and $\Psi_\alpha$ are now obvious. 
We denote as $b_\alpha$ the basis vectors  
$b_1=(1,0,0)^T$, $b_2=(0,1,0)^T$, $b_3=(0,0,1)^T$.
Then $e_\alpha=\tilde e b_\alpha$ and 
$\Psi_\alpha=\widetilde \Psi b_\alpha$.
\end{remark}

\begin{lemma}\label{cosFromPsi}
Let $\lambda$ and $\Psi$ be one of the eigenvalues and the corresponding eigenvector of $J$.
Then, $\cos\theta_k=(a_k\cdot e_z)$ for the corresponding $z$-axis
are uniquely determined  by
\begin{equation}\label{cosTheta}
(\sqrt{m_1}\cos\theta_1,
\sqrt{m_2}\cos\theta_2,
\sqrt{m_3}\cos\theta_3)^T
=\sqrt{M-\lambda}\,\,\Psi.
\end{equation}
\end{lemma}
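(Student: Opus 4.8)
The plan is to obtain this lemma as the exact inverse of the normalization bookkeeping already carried out in Theorem \ref{threeEquivalentStatements}. Given an eigenpair $(\lambda,\Psi)$ of $J$, I would first attach to it the corresponding eigenvector $e_z$ of $I$ sharing the same eigenvalue $\lambda$; this correspondence is precisely what Lemma \ref{ForThreeEigenvectorsII} and Remark \ref{correspondenceEamdPsi} provide, since $I$ and $J$ are similar through the orthogonal matrices $\tilde e$ and $\widetilde\Psi$ whose columns are paired off index by index. Taking this $e_z$ as the $z$-axis and writing $\cos\theta_k=(q_k\cdot e_z)$, Theorem \ref{threeEquivalentStatements} then tells me that $\Psi$ is exactly the vector in \eqref{defPsi}, i.e. $\Psi=(\sqrt{m_1}\cos\theta_1,\sqrt{m_2}\cos\theta_2,\sqrt{m_3}\cos\theta_3)^T/n$ with normalization $n=\sqrt{\sum_\ell m_\ell\cos^2\theta_\ell}$.

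The one computation that makes the lemma work is identifying $n$ with $\sqrt{M-\lambda}$. Using $\sin^2\theta_\ell+\cos^2\theta_\ell=1$ together with the eigenvalue formula $\lambda=\sum_\ell m_\ell\sin^2\theta_\ell$ from Theorem \ref{threeEquivalentStatements}, I get
\begin{equation*}
n^2=\sum\nolimits_\ell m_\ell\cos^2\theta_\ell=\sum\nolimits_\ell m_\ell(1-\sin^2\theta_\ell)=M-\lambda.
\end{equation*}
Multiplying the displayed expression for $\Psi$ through by $n=\sqrt{M-\lambda}$ yields \eqref{cosTheta} at once. The first Remark following Lemma \ref{IxzIyzEq0} guarantees $\lambda\le M$, so the square root is real.

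Uniqueness is then immediate: since every mass is positive, dividing the $k$-th component of the right-hand side of \eqref{cosTheta} by $\sqrt{m_k}$ recovers $\cos\theta_k$ without ambiguity. The only freedom is the global sign of the unit eigenvector $\Psi$, and flipping it simply replaces $e_z$ by $-e_z$, reversing the orientation of the rotation axis and flipping every $\cos\theta_k$ in concert --- the harmless, expected ambiguity. I expect the sole delicate point to be the boundary case $\lambda=M$ (all masses on the equator of the chosen axis), where $n=0$ and the normalization in \eqref{defPsi} degenerates; here one checks directly that \eqref{cosTheta} still returns the correct values $\cos\theta_k=0$, so the statement extends continuously to that case.
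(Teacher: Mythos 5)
Your proof is correct and follows essentially the same route as the paper's: the whole content is the identification of the normalization factor $\sqrt{\sum_\ell m_\ell\cos^2\theta_\ell}$ in \eqref{defPsi} with $\sqrt{M-\lambda}$ via $\lambda=\sum_\ell m_\ell\sin^2\theta_\ell$. Your extra remarks on the sign ambiguity of $\Psi$ and the degenerate case $\lambda=M$ are sound additions, but the core argument matches the paper's one-line proof.
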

\begin{proof}
Since
$M-\lambda=\sum_\ell m_\ell \cos^2(\theta_\ell)\ge 0$,
this lemma follows from the definition of $\Psi$ in \eqref{defPsi}.
\end{proof}

The inverse of Theorem \ref{threeEquivalentStatements} is now obvious, but it is well worth stating it here.

\begin{theorem}\label{PsitoEz}
If $\Psi$ is an eigenvector of $J$,
then the corresponding $e_z$ is the eigenvector of $I$.
with the same eigenvalue $\lambda_\alpha$.
\end{theorem}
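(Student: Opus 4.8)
The plan is to run the computation in the proof of Theorem \ref{threeEquivalentStatements} backwards. The crucial observation is that the chain of equalities in \eqref{Jv1} is a purely algebraic identity: for \emph{any} vector $w \in \mathbb{R}^3$, if one sets $v_i = \sqrt{m_i}(q_i \cdot w)$, then $(Jv)_i = \sqrt{m_i}\, q_i^T I w$, with no assumption that $w$ be an eigenvector of $I$. This identity is the bridge between the $J$-action on particle-indexed vectors and the $I$-action on physical-space vectors, and it can be read in either direction.

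First I would start from an eigenvector $\Psi$ of $J$ with $J\Psi = \lambda\Psi$ and use Lemma \ref{cosFromPsi} to produce the numbers $\cos\theta_k = (q_k \cdot e_z)$ determined by \eqref{cosTheta}. Since for a non-collinear configuration the three position vectors $q_1, q_2, q_3$ are linearly independent, the three linear conditions $q_k \cdot e_z = \cos\theta_k$ determine a unique $e_z \in \mathbb{R}^3$; this is the ``corresponding $e_z$'' of the statement. I would then set $v = \sqrt{M-\lambda}\,\Psi$, so that $v_i = \sqrt{m_i}(q_i \cdot e_z)$ by construction and $Jv = \lambda v$.

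Feeding $w = e_z$ into the identity above gives $(Jv)_i = \sqrt{m_i}\, q_i^T I e_z$, and comparing with $(Jv)_i = \lambda v_i = \lambda\sqrt{m_i}(q_i\cdot e_z)$ yields $q_i \cdot (I e_z - \lambda e_z) = 0$ for every $i$. Linear independence of the $q_i$ then forces $I e_z = \lambda e_z$, so $e_z$ is an eigenvector of $I$ with the same eigenvalue $\lambda$. To finish, I would check the normalization: from $I = M E - \sum_\ell m_\ell q_\ell q_\ell^T$ one has $e_z^T I e_z = M|e_z|^2 - \sum_\ell m_\ell(q_\ell\cdot e_z)^2 = M|e_z|^2 - (M-\lambda)$, the last equality following from \eqref{cosTheta} and $|\Psi|=1$, while $I e_z = \lambda e_z$ gives $e_z^T I e_z = \lambda |e_z|^2$; subtracting yields $(M-\lambda)(|e_z|^2 - 1) = 0$, so $|e_z| = 1$ whenever $\lambda \neq M$.

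The main obstacle is the degenerate situation where the two invertibility hypotheses of the argument fail, namely a collinear configuration (the $q_i$ linearly dependent, so both the reconstruction of $e_z$ and the step forcing $I e_z = \lambda e_z$ break down) together with the boundary case $\lambda = M$ (all masses on the equator, where the normalization step is inconclusive). For these cases I would instead invoke the similarity established in Lemma \ref{ForThreeEigenvectorsII}: since $J = \widetilde\Psi \Lambda \widetilde\Psi^T$ and $I = \tilde e \Lambda \tilde e^T$ share the same diagonal $\Lambda$, the column-by-column correspondence $\Psi_\alpha = \widetilde\Psi b_\alpha \leftrightarrow e_\alpha = \tilde e b_\alpha$ of Remark \ref{correspondenceEamdPsi} pairs each eigenvector of $J$ with an eigenvector of $I$ sharing its eigenvalue, which is exactly the assertion of the theorem; a short continuity argument then covers the remaining degenerate eigenvalue configurations.
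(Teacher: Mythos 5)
Your proposal is correct, but your main argument is not the route the paper takes. The paper's entire proof of Theorem \ref{PsitoEz} is the two-line appeal to the similarity of $I$ and $J$ (Lemma \ref{ForThreeEigenvectorsII}) together with the column-by-column pairing $e_\alpha \leftrightarrow \Psi_\alpha$ of Remark \ref{correspondenceEamdPsi} --- which is exactly what you relegate to a fallback for the degenerate cases. In other words, your ``backup plan'' is already a complete proof of the statement in all cases, and your direct computation, while valid, is logically superfluous. That said, the direct computation has independent value: observing that \eqref{Jv1} is an identity in an arbitrary vector $w$, reconstructing $e_z$ from the linear system $q_k\cdot e_z=\cos\theta_k$ (possible when the $q_k$ are linearly independent, which is also precisely when $\lambda<M$ for every eigenvalue, since $M-\lambda$ are the eigenvalues of the Gram matrix of the $\sqrt{m_k}q_k$ --- so your two degenerate cases actually coincide), and then deducing $q_i\cdot(Ie_z-\lambda e_z)=0$ for all $i$ gives $Ie_z=\lambda e_z$ directly, including the normalization $|e_z|=1$. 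The paper itself gestures at a direct argument of this flavor right after Lemma \ref{identity}, but via the scalar quadratic-form identity \eqref{identityI} (showing $I_{xz}=I_{yz}=0$), whereas yours works with the vector identity and produces the eigenvector equation outright; your version is arguably cleaner and makes the needed non-degeneracy hypotheses explicit, which the paper's sketch does not.
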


\begin{proof}
It follows immediately  by  Lemma \ref{ForThreeEigenvectorsII}
and Remark \ref{correspondenceEamdPsi}.
\end{proof}

\begin{remark}
Theorem \ref{PsitoEz} is useful to find $RE$.
Here, we explain how to use it
For given $\{m_k,\cos\sigma_{ij}\}$,
let $\Psi$ be the eigenvector of $J$ with
eigenvalue $\lambda$.
We know $M\ge \lambda$. 
Then, by \eqref{cosTheta}, 
$\lambda=\sum_\ell m_\ell \sin^2(\theta_\ell)$.
By Theorem \ref{PsitoEz}, the corresponding $e_z$ is the eigenvector of $I$ with the same eigenvalue.
Then, by Theorem \ref{threeEquivalentStatements},
$\cos\theta_k$ are identified with $q_k\cdot e_z$.
Thus, the polar angles $\theta_k$ for the mass $k$ are identified.
\end{remark}

Before closing this section,
we prove the following Lemma.
\begin{lemma}\label{identity}
For the column vector
$v=(\sqrt{m_1}\cos\theta_1,\sqrt{m_2}\cos\theta_2,
\sqrt{m_3}\cos\theta_3)^T$,
\begin{equation}\label{identityI}
v^TJv=
\left(\sum_{\ell=1,2,3}m_\ell\cos^2(\theta_\ell)\right)
\left(\sum_{\ell=1,2,3}m_\ell\sin^2(\theta_\ell)\right)
-(I_{xz}^2+I_{yz}^2)
\end{equation}
is an identity.
\end{lemma}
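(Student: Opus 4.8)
The plan is to prove the identity \eqref{identityI} by direct computation, expanding the left-hand side $v^T J v$ using the explicit entries of $J$ from \eqref{defIByVectors}, namely $J_{ij}=M\delta_{ij}-\sqrt{m_im_j}(q_i\cdot q_j)$. Writing $v_i=\sqrt{m_i}\cos\theta_i$, the quadratic form becomes
\begin{equation*}
v^T J v=\sum_{i,j} v_i J_{ij} v_j
=\sum_{i,j}\sqrt{m_im_j}\cos\theta_i\cos\theta_j
\bigl(M\delta_{ij}-\sqrt{m_im_j}(q_i\cdot q_j)\bigr).
\end{equation*}
The first term collapses to $M\sum_\ell m_\ell\cos^2\theta_\ell$, while the second term is $-\bigl(\sum_i m_i\cos\theta_i\, q_i\bigr)\cdot\bigl(\sum_j m_j\cos\theta_j\, q_j\bigr)=-\bigl|\sum_\ell m_\ell\cos\theta_\ell\, q_\ell\bigr|^2$, using $(q_i\cdot q_j)$ as an honest inner product of vectors in $\mathbb{R}^3$.

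The key algebraic move is then to recognize the vector $w:=\sum_\ell m_\ell\cos\theta_\ell\, q_\ell=\sum_\ell m_\ell (q_\ell\cdot e_z)\,q_\ell$ and to compute $|w|^2$ by splitting it into components along $e_x,e_y,e_z$. Here I would use the definitions \eqref{defIxzIyz}: the $x$- and $y$-components of $w$ are exactly $-I_{xz}$ and $-I_{yz}$ respectively (since $w\cdot e_x=\sum_\ell m_\ell\cos\theta_\ell\sin\theta_\ell\cos\phi_\ell=-I_{xz}$, and similarly for $e_y$), while the $z$-component is $w\cdot e_z=\sum_\ell m_\ell\cos^2\theta_\ell=M-\lambda$. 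Hence $|w|^2=I_{xz}^2+I_{yz}^2+\bigl(\sum_\ell m_\ell\cos^2\theta_\ell\bigr)^2$.

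Substituting back, the left-hand side equals
\begin{equation*}
M\sum_\ell m_\ell\cos^2\theta_\ell
-\Bigl(\sum_\ell m_\ell\cos^2\theta_\ell\Bigr)^2
-(I_{xz}^2+I_{yz}^2),
\end{equation*}
and factoring $\sum_\ell m_\ell\cos^2\theta_\ell$ out of the first two terms gives $\bigl(\sum_\ell m_\ell\cos^2\theta_\ell\bigr)\bigl(M-\sum_\ell m_\ell\cos^2\theta_\ell\bigr)$. Finally, using $M=\sum_\ell m_\ell=\sum_\ell m_\ell(\cos^2\theta_\ell+\sin^2\theta_\ell)$ rewrites the second factor as $\sum_\ell m_\ell\sin^2\theta_\ell$, which is precisely \eqref{identityI}. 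I do not expect any serious obstacle; the only point requiring care is correctly identifying the off-diagonal components of $w$ with $-I_{xz},-I_{yz}$ via \eqref{defIxzIyz}, and keeping consistent the dual role of $(q_i\cdot q_j)$ as both a scalar entry of $J$ and a genuine $\mathbb{R}^3$ inner product when the double sum is reassembled into $|w|^2$. Note that this identity holds without assuming $e_z$ is an eigenvector, so no appeal to $I_{xz}=I_{yz}=0$ is made here.
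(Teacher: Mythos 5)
Your proof is correct and follows essentially the same route as the paper: both reduce $v^TJv$ to the quadratic expression $e_z^T(M-I)Ie_z$ in the inertia tensor and read off $I_{xz},I_{yz},I_{zz}$ as components of a single vector. The only cosmetic difference is that the paper reuses the intermediate formula $(Jv)_i=\sqrt{m_i}\,q_i^TIe_z$ from the proof of Theorem \ref{threeEquivalentStatements}, whereas you expand the double sum from scratch and identify $\bigl|\sum_\ell m_\ell\cos\theta_\ell\, q_\ell\bigr|^2=\bigl|(M-I)e_z\bigr|^2$ componentwise; your closing remark that no eigenvector assumption is needed is accurate and consistent with the paper's use of the lemma.
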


\begin{proof}
Using equation \eqref{Jv1}, we get
\begin{equation}
\begin{split}
\sum_i v_i (Jv)_i
&=\sum_i m_i e_z^Tq_i q_i^T I e_z
=e_z^T(M-I)I e_z\\
&=MI_{zz}-e_zI^2e_z=MI_{zz}-(I_{xz}^2+I_{yz}^2+I_{zz}^2)\\
&=(M-I_{zz})I_{zz}-(I_{xz}^2+I_{yz}^2).
\end{split}
\end{equation}
Since $I_{zz}=\sum_\ell m_\ell \sin^2(\theta_\ell)$,
the Lemma is proved.
\end{proof}
This identity gives an alternative and direct
proof of Theorem \ref{PsitoEz}.
Because if $J\Psi=\lambda\Psi$
then $\Psi^T J \Psi=\lambda$,
which means $I_{xz}=I_{yz}=0$ by the identity.
Namely $e_z$ is the eigenvector of $I$.

\medskip

In this section, we have shown that
$I$ and $J$ are similar,
and that if we take one of the eigenvectors of $I$ as the $z$--axis, then corresponding $\Psi$ is an eigenvector of $J$.
The inverse is also true.

The eigenvalue equation $J\Psi=\lambda\Psi$
gives the relation between the set $\{m_k,\cos\sigma_{ij}\}$ and 
the set $\{\cos\theta_k\}$.

In the next sections,
we will get another relation for the same sets 
$\{m_k,\cos\sigma_{ij}\}$ and $\{\cos\theta_k\}$
from ``the equation of motion''.


\section{Equations of motion}\label{secEqOfMotion}
In this section 
we consider the equations of motion taking $e_z$ as the rotation axis
of a $RE$.
For the derivative of the potential $U$,
we use the notation
$U'(\cos\sigma_{ij})=dU(\cos\sigma_{ij})/d(\cos\sigma_{ij})$.
We  assume that $U'(\cos\sigma_{ij})$ is continuous and has definite sign for all ranges of $\sigma_{ij} \in (0,\pi)$. 
The sign $U'(\cos \sigma_{ij})>0$ stands for attractive force, and $<0$ for repulsive force.

The equations of motion are derived from the above Lagrangian through the Euler-Lagrange equations.
\begin{equation}
\begin{split}
&\frac{d}{dt}\left(m_i \dot\theta_i\right)
=m_i \sin\theta_i\cos\theta_i \dot\phi_i^2
	+\frac{\partial V}{\partial \theta_i},\\
&\frac{d}{dt}\left(m_i\sin^2(\theta_i)\dot\phi_i\right)
	=\frac{\partial V}{\partial \phi_i}.
\end{split}
\end{equation}
Since
$\dot \theta_k=0$ and 
$\dot\phi_k=\omega=$ constant for $RE$,
the left hand side of  both equations vanish.
Thus, the equations of motion for $RE$ are reduced to
\begin{equation}\label{eqOfMotForTheta}
\begin{split}
&\omega^2 m_i \sin\theta_i\cos\theta_i\\
&=\sum_{j\ne i}m_i m_jU'(\cos\sigma_{ij})
	\Big(
		\sin\theta_i\cos\theta_j-\cos\theta_i\sin\theta_j\cos(\phi_i-\phi_j)
	\Big).
\end{split}
\end{equation}
and
\begin{equation}\label{eqforphi}
\begin{split}
&m_1 m_2 U'(\cos \sigma_{12})\sin\theta_1 \sin\theta_2 \sin(\phi_1-\phi_2)\\
=&m_2 m_3 U'(\cos \sigma_{23})\sin\theta_2 \sin\theta_3 \sin(\phi_2-\phi_3)\\
=&m_3 m_1 U'(\cos \sigma_{31})\sin\theta_3 \sin\theta_1 \sin(\phi_3-\phi_1).
\end{split}
\end{equation}
Although the equations (\ref{eqOfMotForTheta}) and  (\ref{eqforphi}) do not involve time derivatives, they are derived immediately from the equations of motion and are the most important equations for $RE$, so they will be referred as 
``the equations of motion'' in the following.

\section{Euler relative equilibrium}\label{sec:Euler}
Most of the results described in this section can be generalized to the  
collinear $n$--body problem. In order to clarify the proofs we will restrict our analysis to the case $n=3$. 

\subsection{Geometry for the Euler relative equilibrium ($ERE$)}
We start with the following proposition.

\begin{proposition}
Taking the rotation axis as the $z$--axis, there are just two kinds of collinear $RE.$ All bodies are on the equator or they are on a rotating meridian.
\end{proposition}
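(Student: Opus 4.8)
The plan is to translate ``collinear'' into a statement about the inertia tensor and then exploit Lemma \ref{rotationAxis}. Since the three bodies lie on a common geodesic, the position vectors $q_1,q_2,q_3$ all lie in a single plane $P$ through the center of $\mathbb{S}^2$; let $n$ be a unit normal to $P$, so that $q_\ell\cdot n=0$ for $\ell=1,2,3$ and the great circle carrying the bodies is $\mathbb{S}^2\cap P$. Because three distinct points of a great circle cannot all lie on one diameter (we exclude $\sigma_{ij}=0$), the vectors $q_\ell$ span the whole plane $P$, and hence $(\mathrm{span}\{q_1,q_2,q_3\})^\perp=\mathrm{span}(n)$.

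First I would show that $n$ is an eigenvector of $I$ with eigenvalue $M$, and that this eigenvalue is simple. Indeed, from $I=ME-\sum_\ell m_\ell q_\ell q_\ell^T$ together with $q_\ell^T n=0$ we get $In=Mn$. Moreover $\sum_\ell m_\ell q_\ell q_\ell^T$ is positive semidefinite, so all eigenvalues of $I$ are at most $M$ (the bound $\lambda\le M$ already noted), and the eigenvalue $M$ corresponds precisely to the kernel of $\sum_\ell m_\ell q_\ell q_\ell^T$, which equals $(\mathrm{span}\{q_\ell\})^\perp=\mathrm{span}(n)$. Hence the eigenspace for $M$ is one-dimensional and spanned by $n$.

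Next I would invoke Lemma \ref{rotationAxis}: for a $RE$ the rotation axis $e_z$ is an eigenvector of $I$. I then split into the only two possibilities allowed by the eigenstructure above. If the eigenvalue of $e_z$ equals $M$, then $e_z$ lies in the one-dimensional eigenspace $\mathrm{span}(n)$, so $e_z=\pm n$; consequently $q_\ell\cdot e_z=0$, i.e. $\cos\theta_\ell=0$ and $\theta_\ell=\pi/2$ for every $\ell$, so all bodies lie on the equator. If instead the eigenvalue of $e_z$ is some $\lambda<M$, then $e_z$ and $n$ are eigenvectors of the symmetric matrix $I$ belonging to distinct eigenvalues, hence $e_z\perp n$; thus $e_z\in P$, the great circle $\mathbb{S}^2\cap P$ passes through the poles $\pm e_z$, and the bodies lie on the meridian great circle through the rotation axis, which rotates uniformly with the solution.

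The only delicate point is guaranteeing that the eigenvalue $M$ is simple, i.e. that the normal direction $n$ is genuinely singled out; this is where the hypotheses that the bodies are distinct and $\sigma_{ij}\in(0,\pi)$ enter, ensuring that the three points span $P$ rather than a single line through the center. Once simplicity is established the dichotomy is forced purely by the orthogonality of eigenvectors of the symmetric tensor $I$, with no separate analysis of degeneracies required, and the two resulting cases are exactly ``all bodies on the equator'' and ``bodies on a rotating meridian.''
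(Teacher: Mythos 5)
Your proof is correct and follows essentially the same route as the paper: the normal to the plane containing the geodesic is an eigenvector of $I$, and Lemma \ref{rotationAxis} forces the rotation axis to be either that normal (equator case) or orthogonal to it (rotating-meridian case). The only difference is that you make explicit that the eigenvalue $M$ is simple (via the kernel of the positive semidefinite part of $I$), a detail the paper's proof leaves implicit when it asserts that the other two eigenvectors must be orthogonal to the normal.
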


\begin{proof}
When all bodies are on one geodesic,
they are on one plane that passes through the center of $\mathbb{S}^2$.
It is obvious that the normal vector to the plane
is an eigenvector of $I$ by its definition.
Taking this vector as the $z$-axis,
all bodies are on the equator.

The other two eigenvectors 
must be orthogonal to the normal vector.
Thus they must be on a plane, and
then all bodies are on one meridian.
\end{proof}

The above result was first proved just for the cotangent potential in \cite{zhu2}.

For the case when the bodies are on the equator, $\theta_k = \pi/2$ for all $k$, 
and then the relations between the shape variables $\sigma_{ij}$ 
and the configuration variables $\phi_k$ are trivial.
For this reason, our method does not contribute anything new.
We can find the $RE$ by
using elementary trigonometric elements,
see for instance \cite{Diacu1, zhu2}. 
We omit these kind of $RE$ in this paper (see for instance \cite{M-S, EPC2} where for the cotangent potential, even the stability of these $RE$ are studied).

For the case when the bodies are on a rotating meridian,
it is convenient to enlarge the range of $\theta_k$
to $-\pi\le \theta_k\le \pi$ with $\phi_k = 0$. 
The condition $I_{xz}=I_{yz}=0$ is reduced to
$\sum_\ell m_\ell\sin(2\theta_\ell)=0$
which is equivalent to diagonalize the two by two matrix
\begin{equation}
I_2=
\left(\begin{array}{ccc}
\sum_\ell m_\ell \cos^2(\theta_\ell) & -\sum_\ell m_\ell \sin\theta_\ell\cos\theta_\ell \\
-\sum_\ell m_\ell \sin\theta_\ell\cos\theta_\ell  & \sum_\ell m_\ell \sin^2(\theta_\ell)
 \end{array}\right).
\end{equation}

The characteristic polynomial for $I_2$ is
\begin{equation*}
p(\lambda)
=\det(\lambda-I_2)
=\lambda^2-M\lambda
+\sum\nolimits_{i<j}m_im_j\sin^2(\theta_{ij}),
\end{equation*}
where
\begin{equation}
\theta_{ij}=\theta_i-\theta_j.
\end{equation}

The discriminant $D$ for $p(\lambda)=0$ is
\begin{equation}
D=\sum\nolimits_\ell m_\ell^2
	+2\sum\nolimits_{i<j}m_im_j \cos(2\theta_{ij}).
\end{equation}

If $D\ne 0$, the matrix $I_2$ has two distinct eigenvectors.
Therefore, by  Lemma \ref{rotationAxis},
the $z$-axis is determined to be one of the two eigenvectors.
So, the angle $\theta_k$ can be determined.

\begin{lemma}\label{propTrans}
For a collinear $RE$ on a rotating meridian, if $D \neq 0$, then the formulae between the configuration variables $\theta_i$ and the shape variables 
$\theta_{ij}=\theta_i-\theta_j$ are given by 
\begin{equation}\label{collinear-condition}
\left( \cos (2\theta_i),\sin (2\theta_i) \right) = sA^{-1} \sum_{j=1,2,3} m_j \left( \cos (2\theta_{ij}),\sin (2\theta_{ij}) \right), 
\text{ for } i=1,2,3
\end{equation}
where $s = \pm 1$ and $A=\sqrt{D}$.
\end{lemma}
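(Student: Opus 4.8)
The plan is to turn the claimed vector identity into two elementary trigonometric sums and to evaluate them using the single constraint coming from $I_{xz}=I_{yz}=0$. On a rotating meridian all $\phi_k=0$, so by \eqref{defIxzIyz} the eigenvector condition reduces to $\sum_\ell m_\ell \sin(2\theta_\ell)=0$. Writing $2\theta_{ij}=2\theta_i-2\theta_j$ and expanding with the angle-subtraction formulas, I would first compute
\begin{align*}
\sum_{j} m_j \cos(2\theta_{ij})
&=\cos(2\theta_i)\sum_j m_j\cos(2\theta_j)+\sin(2\theta_i)\sum_j m_j\sin(2\theta_j),\\
\sum_{j} m_j \sin(2\theta_{ij})
&=\sin(2\theta_i)\sum_j m_j\cos(2\theta_j)-\cos(2\theta_i)\sum_j m_j\sin(2\theta_j).
\end{align*}

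Next I introduce the two scalars $C=\sum_j m_j\cos(2\theta_j)$ and $S=\sum_j m_j\sin(2\theta_j)$. The eigenvector condition is exactly $S=0$, so the two displayed sums collapse to
\[
\sum_{j} m_j\big(\cos(2\theta_{ij}),\sin(2\theta_{ij})\big)
=C\,\big(\cos(2\theta_i),\sin(2\theta_i)\big),
\qquad i=1,2,3.
\]
Thus the asserted formula will follow as soon as I identify the scalar $C$ with $sA=s\sqrt{D}$; dividing by $C$ is legitimate precisely because the hypothesis $D\ne 0$ will force $C\ne 0$.

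To close, I compute $C^2$. Using $\cos(2\theta_j)\cos(2\theta_k)+\sin(2\theta_j)\sin(2\theta_k)=\cos(2\theta_{jk})$, I find
\[
C^2+S^2=\sum_{j,k} m_j m_k\cos(2\theta_{jk})
=\sum_\ell m_\ell^2+2\sum_{i<j} m_i m_j\cos(2\theta_{ij})=D,
\]
which is exactly the discriminant. Since $S=0$ on the meridian, this yields $C^2=D$, hence $C=\pm\sqrt{D}=sA$ with $s=\pm 1$ the sign of $C$, and therefore $1/C=s/A=sA^{-1}$. Substituting back gives \eqref{collinear-condition}.

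There is no serious obstacle here: the argument is a short computation once one observes that the eigenvector condition $S=0$ is precisely the vanishing needed to factor the two sums, and that the same condition reduces the Pythagorean combination $C^2+S^2$ to the discriminant $D$. The only point that genuinely needs care is to remark that the hypothesis $D\ne 0$ guarantees $C\ne 0$, so that both $A^{-1}$ and the sign $s$ are well defined.
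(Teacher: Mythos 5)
Your proof is correct and follows essentially the same route as the paper: both rest on the meridian constraint $\sum_\ell m_\ell\sin(2\theta_\ell)=0$ together with the observation that the squared amplitude of the weighted sums equals the discriminant $D$, so that the normalizing scalar is $\pm\sqrt{D}=sA$. The only difference is cosmetic — you treat all three indices $i$ symmetrically via the scalars $C$ and $S$, whereas the paper solves for $\theta_1$ alone and propagates by $\theta_k=\theta_1+\theta_{k1}$.
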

\begin{proof}
If $A\ne 0$, the equation 
$0=\sum_k m_k\sin(2\theta_k)
=\sum_k m_k \sin(2(\theta_1+\theta_{k1}))$
has two solutions
\begin{equation}\label{translationFormula}
\begin{split}
\cos(2\theta_1)&=s A^{-1}\left(
	m_1+m_2\cos\Big(2(\theta_1-\theta_2)\Big)
				+m_3\cos\Big(2(\theta_1-\theta_3)\Big)
	\right),\\
\sin(2\theta_1)&=s A^{-1}\left(
	m_2\sin\Big(2(\theta_1-\theta_2)\Big)
				+m_3\sin\Big(2(\theta_1-\theta_3)\Big)
	\right),
\end{split}
\end{equation}
where $s=\pm1$. The other angles $\theta_k$ are determined by 
$\theta_k=\theta_1+\theta_{k1}$.
\end{proof}

\begin{remark}
The two branches given for $s= \pm 1$ corresponds to the existence of  two  principal axes for $I_2$. 
We will show that ``the equations of motion'' determine the sign of $s$.
\end{remark}

\begin{lemma}\label{restri}
For a collinear $RE$ on a rotating meridian,
the shapes that give $D=0$ are restricted to the cases
$m_k<m_i+m_j$
and
\begin{equation}
\cos(2(\theta_i-\theta_j))=\frac{m_k^2-m_i^2-m_j^2}{2m_im_j}.
\end{equation}
for
$(i,j,k) \in cr(1,2,3)$.
\end{lemma}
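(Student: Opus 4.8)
The plan is to characterize exactly when the discriminant
\[
D=\sum\nolimits_\ell m_\ell^2+2\sum\nolimits_{i<j}m_im_j\cos(2\theta_{ij})
\]
vanishes, and to extract from $D=0$ the stated formula for $\cos(2(\theta_i-\theta_j))$ together with the triangle-type inequality $m_k<m_i+m_j$. First I would note that $D$ is precisely the squared length of a vector sum: introducing the planar unit vectors $u_\ell=(\cos(2\theta_\ell),\sin(2\theta_\ell))$, one checks by expanding the dot products that $D=\bigl|\sum_\ell m_\ell u_\ell\bigr|^2$, since $m_i m_j(u_i\cdot u_j)=m_im_j\cos(2\theta_i-2\theta_j)=m_im_j\cos(2\theta_{ij})$ and the diagonal terms give $\sum_\ell m_\ell^2$. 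Thus $D\ge 0$ always, and $D=0$ holds if and only if $\sum_\ell m_\ell u_\ell=0$, i.e.\ the three vectors $m_1u_1,m_2u_2,m_3u_3$ close up into a triangle.

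Next I would exploit this closure condition. If $\sum_\ell m_\ell u_\ell=0$ with the $u_\ell$ unit vectors and the $m_\ell$ positive, then the three vectors $m_1u_1,m_2u_2,m_3u_3$ form a closed triangle whose side lengths are exactly $m_1,m_2,m_3$. The existence of such a triangle forces each strict inequality $m_k<m_i+m_j$ (strict because the $u_\ell$ must be genuinely noncollinear for the angles $2\theta_{ij}$ to be the triangle's exterior angles; degenerate collinear closure would correspond to the equality case, which I would handle separately). To get the explicit cosine formula I would isolate one vector: from $m_i u_i+m_j u_j=-m_k u_k$ I take squared norms,
\[
m_i^2+m_j^2+2m_im_j(u_i\cdot u_j)=m_k^2,
\]
and solving for the dot product gives $\cos(2\theta_{ij})=(u_i\cdot u_j)=(m_k^2-m_i^2-m_j^2)/(2m_im_j)$, which is the claimed identity for each $(i,j,k)\in cr(1,2,3)$.

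For the converse direction needed to make this a genuine characterization of the $D=0$ shapes, I would observe that prescribing $\cos(2(\theta_i-\theta_j))$ by the stated formula for all three cyclic triples is consistent precisely when $m_k<m_i+m_j$ holds, by the law of cosines interpretation: the three mutual angles between the $u_\ell$ are then the angles of a triangle with sides $m_1,m_2,m_3$, so the $m_\ell u_\ell$ can be arranged to sum to zero, giving $D=0$. The main obstacle I anticipate is the bookkeeping of \emph{signs and angle doubling}: because the shape variables are the doubled angles $2\theta_{ij}$, a given value of $\cos(2\theta_{ij})$ corresponds to two possible angles, and I must verify that the orientations can be chosen \emph{simultaneously} and consistently across all three pairs so that the vectors actually close up, rather than merely matching the three cosine values pairwise. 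Resolving this requires checking that the three exterior angles of the closing triangle are mutually compatible (their appropriate sum is a full turn), which is automatic once the triangle with sides $m_1,m_2,m_3$ exists but must be stated carefully to avoid a spurious solution. I would therefore frame the argument through the vanishing of $\sum_\ell m_\ell u_\ell$ throughout, since that single vector equation encodes both the magnitude data (the cosine formula) and the orientation data automatically.
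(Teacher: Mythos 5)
Your argument is correct and takes essentially the same route as the paper: both identify $D$ with the squared norm $\bigl|\sum_\ell m_\ell(\cos(2\theta_\ell),\sin(2\theta_\ell))\bigr|^2$, so that $D=0$ forces three vectors of lengths $m_1,m_2,m_3$ to close into a triangle, yielding $m_k<m_i+m_j$ and the law-of-cosines formula. You simply spell out the final deduction (which the paper compresses into ``then the Lemma follows'') and the degenerate/strictness discussion in more detail.
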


\begin{proof}
Since
$D=\Big(m_1+m_2\cos(2\theta_{12})
				+m_3\cos(2\theta_{13})\Big)^2+
	\Big(m_2\sin(2\theta_{12})
				+m_3\sin(2\theta_{13})\Big)^2$,
$D=0$ yields
$m_1+m_2\cos(2\theta_{12})+m_3\cos(2\theta_{13})=0$
and $m_2\sin(2\theta_{12})+m_3\sin(2\theta_{13})=0$.
Equivalently,
$\sum_{\ell}m_\ell (\cos(2\theta_\ell),\sin(2\theta_\ell))=0$.
Then, Lemma  \ref{restri} follows.
\end{proof}

\begin{remark}
Lemma \ref{restri} shows that
the cases for $D=0$ are rare.
For given masses with $m_k<m_i+m_j$,
the number of solutions of $\theta_i-\theta_j$
are finite.
We can check
whether these shapes can form $RE$
with the given potential $V$ or not.
\end{remark}

\subsection{Equations of motion for $ERE$ on a rotating meridian}
Since in this case $\sin(\phi_i-\phi_j)=0$
and  $\cos(\phi_i-\phi_j)=1$ for all pair $(i,j)$,
``the equations of motion'' (\ref{eqforphi}) are satisfied,
and the equations (\ref{eqOfMotForTheta})
are reduced to
\begin{equation}\label{eqThetaForMeridian0}
\begin{split}
\frac{\omega^2}{2}m_k\sin(2\theta_k)
&= m_k \sum_{j \ne k} m_j\sin(\theta_k-\theta_j)U'( \cos (\theta_k - \theta_j)).\\
\end{split}
\end{equation}

\begin{proposition}\label{shapetoERE}
If $A\ne 0$, 
``the equations of motion'' for $ERE$ on a rotating meridian
are equivalent to the following equations,
\begin{equation}
\label{eqThetaForMeridian}
\begin{split}
&m_1m_2\left(s\frac{\omega^2}{2A}\sin\Big(2(\theta_1-\theta_2)\Big)
- \sin(\theta_1-\theta_2)U'(\cos (\theta_1 - \theta_2))
\right)\\
=&m_2m_3\left(s\frac{\omega^2}{2A}\sin\Big(2(\theta_2-\theta_3)\Big)
- \sin(\theta_2-\theta_3)U'(\cos (\theta_2 - \theta_3))
\right)\\
=&m_3m_1\left(s\frac{\omega^2}{2A}\sin\Big(2(\theta_3-\theta_1)\Big)
- \sin(\theta_3-\theta_1)U'(\cos (\theta_3 - \theta_1))
\right).
\end{split}
\end{equation}
\end{proposition}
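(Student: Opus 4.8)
The plan is to substitute the translation formula of Lemma \ref{propTrans} into the reduced equations of motion \eqref{eqThetaForMeridian0}, thereby trading the absolute angles $\theta_k$ for the shape differences $\theta_{ij}=\theta_i-\theta_j$, and then to recognise the resulting relations as the chain of equalities \eqref{eqThetaForMeridian}. First I would divide \eqref{eqThetaForMeridian0} by $m_k>0$, writing it as $\tfrac{\omega^2}{2}\sin(2\theta_k)=\sum_{j\ne k}m_j\sin(\theta_k-\theta_j)U'(\cos(\theta_k-\theta_j))$ for each $k$. Since $A\ne 0$, the translation formula \eqref{collinear-condition} holds for every index $k$ with one and the same sign $s$, and its second component reads $\sin(2\theta_k)=sA^{-1}\sum_{j\ne k}m_j\sin\big(2(\theta_k-\theta_j)\big)$. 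This formula encodes the standing principal-axis constraint $\sum_\ell m_\ell\sin(2\theta_\ell)=0$ for the rotating meridian, so substituting it into the left-hand side and moving everything to one side turns the $k$-th equation of motion into
\[
\sum_{j\ne k} m_j\,P_{kj}=0,\qquad
P_{kj}:=s\frac{\omega^2}{2A}\sin\!\big(2(\theta_k-\theta_j)\big)
-\sin(\theta_k-\theta_j)\,U'(\cos(\theta_k-\theta_j)).
\]

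The key structural observation is that $P_{kj}$ is antisymmetric, $P_{kj}=-P_{jk}$, because $\sin$ is odd while $\cos$ (hence $U'(\cos)$) is even in $\theta_k-\theta_j$. I would then introduce the three quantities appearing in \eqref{eqThetaForMeridian}, namely $f_{12}=m_1m_2P_{12}$, $f_{23}=m_2m_3P_{23}$, $f_{31}=m_3m_1P_{31}$, which satisfy $f_{ij}=-f_{ji}$. Multiplying the $k$-th equation by $m_k$ and using antisymmetry gives $m_1\sum_{j\ne1}m_jP_{1j}=f_{12}-f_{31}$, $m_2\sum_{j\ne2}m_jP_{2j}=f_{23}-f_{12}$, and $m_3\sum_{j\ne3}m_jP_{3j}=f_{31}-f_{23}$. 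Since the masses are positive, the equation of motion for $k=1$ is equivalent to $f_{12}=f_{31}$, that for $k=2$ to $f_{23}=f_{12}$, and that for $k=3$ to $f_{31}=f_{23}$.

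Taking the three together is exactly the double equality $f_{12}=f_{23}=f_{31}$, i.e.\ \eqref{eqThetaForMeridian}; conversely the double equality returns each pairwise identity and hence each equation of motion, so the equivalence holds in both directions. No analysis is involved beyond trigonometric addition formulas and the positivity of the masses, so the whole argument is a controlled substitution together with antisymmetry bookkeeping.

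The only point that needs genuine care, and which I expect to be the main (if minor) obstacle, is justifying that the single sign $s$ of \eqref{collinear-condition} may be used simultaneously for all three indices $k$, since this is what makes the substitution uniform: fixing $\theta_1$ through the two branches of \eqref{translationFormula} fixes $\theta_k=\theta_1+\theta_{k1}$, and a short computation using $\theta_{k\ell}=\theta_{1\ell}-\theta_{1k}$ together with the addition formulas shows the same $s$ reproduces both $\cos(2\theta_k)$ and $\sin(2\theta_k)$. A useful consistency check, reflecting that $e_z$ is a principal axis, is that the three equations of motion are not independent: multiplying the $k$-th one by $m_k$ and summing yields $\tfrac{\omega^2}{2}\sum_k m_k\sin(2\theta_k)=\sum_{k<j}m_km_j(h_{kj}+h_{jk})$ with $h_{kj}=\sin(\theta_k-\theta_j)U'(\cos(\theta_k-\theta_j))$ antisymmetric, so both sides vanish identically and correspondingly only two of the three equalities in \eqref{eqThetaForMeridian} are independent.
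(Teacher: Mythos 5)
Your proposal is correct and follows exactly the route the paper intends: substitute the translation formula \eqref{collinear-condition} into \eqref{eqThetaForMeridian0} and reorganize using the antisymmetry of the summands; the paper's own proof is just the one-line instruction to do this, and your antisymmetry bookkeeping supplies the omitted details faithfully. The sign and consistency checks you add are sound but not needed beyond what Lemma \ref{propTrans} already guarantees.
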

\begin{proof} 
Using the  formulae \eqref{collinear-condition} between $\theta_k$ and $\theta_i - \theta_j$ given in Lemma \ref{propTrans} when $A\neq 0$ we obtain the result.
\end{proof}
For the same reason that we explained before, even when we do not have the time derivative in the above equation, 
the equation (\ref{eqThetaForMeridian})
will be referred to as ``the equation of motion''
for a $ERE$ on a rotating meridian.

Now we define the useful expressions 
\begin{equation}
\label{defOfFandG}
F_{ij} = m_im_j\sin(\theta_i-\theta_j)U'(\cos \theta_{ij}),\qquad 
G_{ij} = m_im_j\sin (2(\theta_i-\theta_j)).
\end{equation}
Then, the equations
\eqref{eqThetaForMeridian} can be written in a compact form as
\begin{equation}\label{eqcompact}
s\frac{\omega^2}{2A}(G_{ij}-G_{jk}) = F_{ij} - F_{jk} \quad
\mbox{ for }(i,j,k)\in cr(1,2,3).
\end{equation}

\begin{theorem}
[Condition for a shape]
\label{propConditionForShape}
If $A\ne0$,
\begin{equation}
det
=\left|\begin{array}{cc}
G_{12}-G_{23} & G_{31}-G_{12} \\
F_{12}-F_{23} & F_{31}-F_{12}
\end{array}\right|
=0
\end{equation}
is a necessary and sufficient condition for a shape to satisfy 
``the equations of motion'' \eqref{eqThetaForMeridian}, and then to generate an $ERE$.
\end{theorem}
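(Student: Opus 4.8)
The plan is to reduce the three cyclic equations of motion \eqref{eqcompact} to a single scalar linear equation in one unknown and then read off the solvability condition. I introduce the scalar $c=s\omega^{2}/(2A)$, which is well defined because $A\neq 0$; an $ERE$ on the rotating meridian exists for the given shape precisely when there is a value of $c$ (with a compatible sign, see below) satisfying \eqref{eqcompact} for all $(i,j,k)\in cr(1,2,3)$. Writing the three instances explicitly, $c(G_{12}-G_{23})=F_{12}-F_{23}$, $c(G_{23}-G_{31})=F_{23}-F_{31}$ and $c(G_{31}-G_{12})=F_{31}-F_{12}$, I would first observe that the three left-hand sides sum to zero identically, as do the three right-hand sides, since each of $\sum(G_{ij}-G_{jk})$ and $\sum(F_{ij}-F_{jk})$ telescopes. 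Hence the third equation is the negative of the sum of the other two, and the system is equivalent to the two equations indexed by $(1,2,3)$ and $(3,1,2)$.

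Next I would regard these two as the single linear system $c\,(G_{12}-G_{23})=F_{12}-F_{23}$ and $c\,(G_{31}-G_{12})=F_{31}-F_{12}$ in the one unknown $c$. A common solution $c$ exists if and only if the two column vectors $(G_{12}-G_{23},\,F_{12}-F_{23})^{T}$ and $(G_{31}-G_{12},\,F_{31}-F_{12})^{T}$ are linearly dependent. Eliminating $c$ by cross-multiplying (multiply the first equation by $G_{31}-G_{12}$, the second by $G_{12}-G_{23}$, and subtract) gives exactly $(G_{12}-G_{23})(F_{31}-F_{12})-(G_{31}-G_{12})(F_{12}-F_{23})=0$, which is the stated determinant. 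This yields both directions at once: if the shape generates an $ERE$ then such a $c$ exists and the determinant vanishes (necessity), and conversely if the determinant vanishes the two equations are proportional, so a common $c$ can be solved for (sufficiency). Proposition \ref{shapetoERE} guarantees that \eqref{eqcompact} is equivalent to the equations of motion \eqref{eqThetaForMeridian}, closing the equivalence, and for sufficiency I recover the dynamical data by solving $c=(F_{12}-F_{23})/(G_{12}-G_{23})$ from whichever equation has nonzero $G$-difference, then setting $\omega^{2}=2Ac/s$ and choosing the branch $s=\pm1$ (recall $A=\sqrt{D}>0$) so that $\omega^{2}\ge0$; the sign of $c$ fixes $s$, and $c=0$ corresponds to a fixed point.

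The step I expect to be the genuine obstacle is the degenerate subcase in which both $G$-differences $G_{12}-G_{23}$ and $G_{31}-G_{12}$ vanish: there the determinant is automatically zero, but solvability of the system additionally forces $F_{12}-F_{23}=F_{31}-F_{12}=0$, so the determinant condition alone is not sufficient. I would treat this locus separately, noting that it requires all three products $m_im_j\sin(2\theta_{ij})$ to coincide and is therefore non-generic, and checking the $F$-balance directly; outside this degenerate set the elimination argument delivers the clean equivalence claimed by the theorem.
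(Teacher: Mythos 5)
Your proposal is correct and follows essentially the same route as the paper: eliminate the single unknown $c=s\omega^{2}/(2A)$ from the (two independent) equations of \eqref{eqcompact} and read off the vanishing of the $2\times 2$ determinant as the compatibility condition, then recover $\omega^{2}$ and the sign $s$ from whichever $G$-difference is nonzero. The degenerate subcase you flag --- both $G$-differences zero while some $F$-difference is nonzero, so that $det=0$ holds vacuously yet \eqref{eqcompact} is unsolvable --- is a genuine caveat that the paper's own proof passes over (its ``second case'' tacitly assumes a nonzero $G$-difference is available and dismisses the remaining cases with ``similarly''), so your explicit separate treatment of that locus makes your argument slightly more careful than the published one.
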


\begin{proof}
If equations \eqref{eqcompact} are satisfied, then
$det=0$ is obvious.

Inversely, if $det=0$, then there are two cases.
The first case is when all elements of the matrix are zero.
For this case, ``the equations of motion'' 
\eqref{eqcompact}  are trivially satisfied and $\omega$ is undetermined.

The second case 
is when at least one of the elements of the matrix 
is not zero.
For example, let  $G_{12}-G_{23}\ne 0$.
We define
$s\omega^2$ by
$s\omega^2/(2A)=(F_{12}-F_{23})/(G_{12}-G_{23})$.
Then,
$det=0$ yields $s\omega^2/(2A)(G_{31}-G_{12})=F_{31}-F_{12}$.
Thus the equations  \eqref{eqcompact}
are satisfied.
Similarly, all other cases satisfy the equations
\eqref{eqcompact}.
\end{proof}

\begin{remark}
As shown in the previous example,
the sign of $(F_{12}-F_{23})/(G_{12}-G_{23})$ determines $s$.
Namely, ``the equations of motion'' determine $s$.
If this value is zero, then $\omega=0$ and the $ERE$ is a fixed point.
\end{remark}

We finish this subsection by setting
the following two corollaries.

\begin{cor}[Equilateral $RE$ on a rotating meridian]\label{equilateralRigidRotator}
For any three masses, the equilateral triangle
$\theta_1-\theta_2=\theta_2-\theta_3=\theta_3-\theta_1
=2\pi/3$ generates a $RE$ on a rotating meridian.
\end{cor}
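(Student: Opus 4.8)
The plan is to exploit the full symmetry of the equilateral shape, which makes the quantities $F_{ij}$ and $G_{ij}$ of \eqref{defOfFandG} proportional pair by pair, and then to quote Theorem \ref{propConditionForShape}. Concretely, for $\theta_1-\theta_2=\theta_2-\theta_3=\theta_3-\theta_1=2\pi/3$ every mutual angle has $\cos\theta_{ij}=\cos(2\pi/3)=-1/2$, $\sin\theta_{ij}=\sqrt3/2$ and $\sin(2\theta_{ij})=\sin(4\pi/3)=-\sqrt3/2$. First I would substitute these three constants into \eqref{defOfFandG} to obtain $F_{ij}=\tfrac{\sqrt3}{2}\,U'(-1/2)\,m_im_j$ and $G_{ij}=-\tfrac{\sqrt3}{2}\,m_im_j$ for every pair, so that $F_{ij}=-U'(-1/2)\,G_{ij}$ uniformly in $(i,j)$.

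Second, this single identity is the crux: it says that the second row $(F_{12}-F_{23},\,F_{31}-F_{12})$ of the determinant in Theorem \ref{propConditionForShape} equals $-U'(-1/2)$ times the first row $(G_{12}-G_{23},\,G_{31}-G_{12})$. Two proportional rows force $det=0$ identically, so by Theorem \ref{propConditionForShape} the equilateral shape satisfies ``the equations of motion'' and is an $ERE$, provided $A\neq0$. The angular velocity is then read off from \eqref{eqcompact}: whenever some cyclic difference $G_{ij}-G_{jk}$ is nonzero one gets $s\,\omega^2/(2A)=-U'(-1/2)$, i.e.\ $\omega^2=-2As\,U'(-1/2)$, and one selects the branch $s=\pm1$ making $\omega^2\ge0$ (the sign of $U'(-1/2)$ decides which). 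If $U'(-1/2)=0$ then $\omega=0$ and the $ERE$ is a fixed point in the sense of Definition \ref{fixed-point}.

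The hard part, and the only place where ``for any three masses'' needs care, is the degenerate case $A=0$. For this shape the discriminant reduces to $D=\sum_\ell m_\ell^2-\sum_{i<j}m_im_j=\tfrac12\sum_{i<j}(m_i-m_j)^2$, so $A=\sqrt{D}=0$ occurs exactly for $m_1=m_2=m_3$, where Theorem \ref{propConditionForShape} is unavailable (the meridian direction is undetermined, since $I_2$ becomes a multiple of the identity). Here I would argue directly from \eqref{eqThetaForMeridian0}: with equal masses the two neighbours of each body sit symmetrically at arc $\pm2\pi/3$, so the two force terms cancel and every right-hand side vanishes; hence $\tfrac{\omega^2}{2}\sin(2\theta_k)=0$ for $k=1,2,3$. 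Since the three angles $2\theta_k$ are spaced by $2\pi/3$ they cannot all be multiples of $\pi$, forcing $\omega=0$. Thus the equal-mass equilateral meridian configuration is a fixed point, which is still an $RE$, completing the claim for all masses.

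Finally I would record the consistency remark that the orthogonality $I_{xz}=0$ required to make $e_z$ a principal axis is automatic here: summing \eqref{eqThetaForMeridian0} over $k$ kills the antisymmetric right-hand side and leaves $\omega^2 I_{xz}=0$, so every solution with $\omega\neq0$ already lies on the correct rotating meridian, in agreement with the framework of Proposition \ref{shapetoERE}.
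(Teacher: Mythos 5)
Your proposal is correct and follows essentially the same route as the paper: the identity $F_{ij}=-U'(-1/2)\,G_{ij}$ that you use to force $det=0$ is exactly the statement that each parenthesis in \eqref{eqThetaForMeridian} vanishes for $s\omega^2/(2A)=-U'(-1/2)$, which is the paper's argument, and both treatments dispose of the degenerate case $A=0$ (equal masses) separately as a fixed point. Your extra details --- showing that $\omega$ must vanish when $m_1=m_2=m_3$ because the $2\theta_k$ cannot all be multiples of $\pi$, and the closing consistency remark on $I_{xz}$ --- are sound but not needed for the corollary.
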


\begin{proof}
For this shape, $A^2 = \sum_{i<j} (m_i-m_j)^2/2$.
If $m_1=m_2=m_3$ then $A=0$, 
the equations of motion \eqref{eqThetaForMeridian0} are satisfied with $\omega = 0$, the $RE$ is a fixed point. 
For the others cases $A  \neq 0$.  
Then the expressions in the three parentheses of equation \eqref{eqThetaForMeridian} are zero by taking 
$s\omega^2/(2A) = - U'(-1/2)$. 
An alternative proof for the non-equal masses case is obtained by using \eqref{collinear-condition}. We get $\sin (2\theta_k) = -s\sqrt{3}(m_i-m_j)/(2A)$, and we can verify easily that they satisfy 
equation \eqref{eqThetaForMeridian0}.
\end{proof}

\begin{cor}
If a shape on a rotating meridian generates a $RE$ for an
attractive potential $U$, then
the same shape generates a $RE$ for the repulsive potential $-U$.
The corresponding two configurations
differ only by an overall angle of $90$ degrees,
if not, it is a fixed point.
\end{cor}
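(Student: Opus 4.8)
The corollary claims that a shape $\{\theta_k\}$ on a rotating meridian generating a $RE$ for an attractive potential $U$ also generates a $RE$ for the repulsive potential $-U$, with the two configurations differing only by an overall $90^\circ$ rotation (unless it is a fixed point). The natural object to exploit is equation \eqref{eqcompact}, namely $s\,\omega^2/(2A)\,(G_{ij}-G_{jk}) = F_{ij}-F_{jk}$, together with the definitions \eqref{defOfFandG} of $F_{ij}$ and $G_{ij}$.

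**The plan.** First I would observe that the shape (the set of differences $\theta_{ij}=\theta_i-\theta_j$) enters the equations of motion only through the $G_{ij}$ and $F_{ij}$, and that the $G_{ij}$ depend solely on the shape while the $F_{ij}$ carry the potential via $U'(\cos\theta_{ij})$. Replacing $U$ by $-U$ flips the sign of every $F_{ij}$, hence of every difference $F_{ij}-F_{jk}$, while leaving $G_{ij}-G_{jk}$ untouched. Therefore \eqref{eqcompact} for the potential $-U$ reads $s'\,\omega'^2/(2A)\,(G_{ij}-G_{jk}) = -(F_{ij}-F_{jk})$. Comparing with the original system, which is solvable by hypothesis, I would conclude that the new system is solved by the same $\omega$ (so $\omega'^2=\omega^2$) but with the opposite sign $s'=-s$. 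Thus the same shape is again a $RE$, which establishes the first assertion.

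**From the sign flip to the $90^\circ$ rotation.** The remaining point is to translate the change $s\mapsto -s$ into a rigid rotation of the configuration by $90^\circ$. Here I would use the translation formula \eqref{collinear-condition} from Lemma \ref{propTrans}, which expresses $(\cos 2\theta_i,\sin 2\theta_i)$ as $sA^{-1}$ times a vector depending only on the shape. Flipping $s$ to $-s$ negates the right-hand side, so $(\cos 2\theta_i^{\mathrm{new}},\sin 2\theta_i^{\mathrm{new}}) = -(\cos 2\theta_i,\sin 2\theta_i)$; this means $2\theta_i^{\mathrm{new}} = 2\theta_i + \pi$, i.e. $\theta_i^{\mathrm{new}} = \theta_i + \pi/2$ for every $i$ simultaneously. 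Since the shift is the same constant $\pi/2$ for all bodies, the differences $\theta_{ij}$ are preserved (confirming it is the same shape) and the whole configuration is rigidly rotated by $90^\circ$ about the rotation axis, as claimed.

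**The degenerate case and the main obstacle.** The exceptional clause ``if not, it is a fixed point'' should be handled by noting when $\omega=0$. If the original $RE$ has $\omega\neq 0$, the argument above applies verbatim and $s'=-s$ forces the genuine $90^\circ$ rotation; if instead $F_{ij}-F_{jk}=0$ for all cyclic triples, then $\omega=0$ and the solution is a fixed point, for which the sign $s$ plays no dynamical role and no rotation is forced. The main subtlety I anticipate is the bookkeeping of $s$ and $\omega$ when $A\neq 0$ but some $G_{ij}-G_{jk}$ vanishes: one must, as in the proof of Theorem \ref{propConditionForShape}, pick a cyclic index for which $G_{ij}-G_{jk}\neq 0$ to define $s\omega^2/(2A)$ and then check consistency across the other indices. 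I expect this to be routine rather than genuinely hard, since the determinant condition $det=0$ from Theorem \ref{propConditionForShape} depends only on the shape and is manifestly invariant under $U\mapsto -U$ (it merely reverses the sign of the second row), so the solvability of the new system is guaranteed by the solvability of the old one.
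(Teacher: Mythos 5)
Your proposal is correct and follows essentially the same route as the paper's proof: replacing $U$ by $-U$ flips the sign of the $F_{ij}-F_{jk}$ while leaving the $G_{ij}-G_{jk}$ unchanged, so \eqref{eqcompact} is restored by $s\to -s$ with the same $\omega^2$, and the formula \eqref{collinear-condition} then converts the sign flip into $\theta_k\to\theta_k+\pi/2$. Your treatment is in fact more careful than the paper's one-line argument, notably in pinning down the fixed-point case ($F_{ij}-F_{jk}=0$ for all cyclic triples, hence $\omega=0$ and no rotation forced), but the underlying idea is identical.
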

\begin{proof}
If we change the potential $U$ (attractive) to $-U$ (repulsive),
the $F_{ij}-F_{jk}$ change their  sign,
and the equalities are satisfied by simply changing 
 $s \to -s$ which produces  $\theta_k \to \theta_k+\pi/2$.
\end{proof}


\section{Lagrange relative equilibrium}\label{sec:Lagrange}
In this section, we will show that the ``equations of motion'',
\eqref{eqOfMotForTheta} and \eqref{eqforphi},
require
a specific combinations of $\{m_k, \cos\sigma_{ij}\}$ 
and a specific  principal axis for rotation, in order to get a
 $LRE$.
We restrict our analysis to the case $n=3$.

\subsection{Equations of motion for $LRE$}
We start to list up
the necessary conditions
for the $LRE$.
\begin{proposition}\label{propLRE}
The necessary conditions to have a $LRE$ are
\begin{equation}
\label{conditionForLagrangian1}
\sin\theta_k\ne 0
\mbox{ and }
\sin(\phi_i-\phi_j)\ne 0
\mbox{ for all }k
\mbox{ and all pair }i,j, 
\end{equation}
\begin{equation}
\label{conditionForLagrangian2}
U'(\cos \sigma_{12})\cos\theta_3
=U'(\cos \sigma_{23})\cos\theta_1
=U'(\cos \sigma_{31})\cos\theta_2 \ne 0.
\end{equation}
\end{proposition}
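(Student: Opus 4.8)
The plan is to treat the two assertions separately: I extract \eqref{conditionForLagrangian1} from the azimuthal equation \eqref{eqforphi} alone, and \eqref{conditionForLagrangian2} by recasting the full system \eqref{eqOfMotForTheta}--\eqref{eqforphi} as a single tangential vector equation and reading it off in the dual basis of $q_1,q_2,q_3$.

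For \eqref{conditionForLagrangian1} I would first prove $\sin\theta_k\neq0$. Suppose $\sin\theta_1=0$, so $q_1=\pm e_z$. Then two of the three products in \eqref{eqforphi} vanish (they carry the factor $\sin\theta_1$), forcing the third, $m_2m_3U'(\cos\sigma_{23})\sin\theta_2\sin\theta_3\sin(\phi_2-\phi_3)$, to vanish as well. Since $U'$ has a definite sign, this requires $\sin\theta_2=0$, $\sin\theta_3=0$, or $\sin(\phi_2-\phi_3)=0$; in each case the three masses lie on one geodesic (a common meridian through the pole, or the great circle through an antipodal pair), contradicting non-collinearity. Hence $\sin\theta_k\neq0$ for all $k$. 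With this in hand, if some $\sin(\phi_1-\phi_2)=0$ then the first product in \eqref{eqforphi} is zero, hence all three are, and $\sin\theta_k\neq0$, $U'\neq0$ then force $\sin(\phi_2-\phi_3)=\sin(\phi_3-\phi_1)=0$ too; the masses would share one meridian, again collinear. This gives the second half of \eqref{conditionForLagrangian1}.

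For \eqref{conditionForLagrangian2} I would first note that, writing $\cos\theta_i=(q_i\cdot e_z)$ and using $\dot q_i=\omega e_z\times q_i$, the centripetal acceleration has tangential part $\omega^2\cos\theta_i(e_z-\cos\theta_i q_i)$, so the equations of motion \eqref{eqOfMotForTheta} and \eqref{eqforphi} are together equivalent to the single tangential identity
\begin{equation}
m_i\omega^2\cos\theta_i\,(e_z-\cos\theta_i q_i)=\sum_{j\neq i}m_im_jU'(\cos\sigma_{ij})\,(q_j-\cos\sigma_{ij}\,q_i),
\end{equation}
which one checks by projecting onto the two tangential directions $\partial q_i/\partial\theta_i$ and $\partial q_i/\partial\phi_i$. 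Taking the cross product with $q_i$ kills the $q_i$ terms and yields, after dividing by $m_i$,
\begin{equation}
\omega^2\cos\theta_i\,(q_i\times e_z)=\sum_{j\neq i}m_jU'(\cos\sigma_{ij})\,(q_i\times q_j).
\end{equation}
Since a $LRE$ is non-collinear, $q_1,q_2,q_3$ are linearly independent and $V_0=(q_1\cdot(q_2\times q_3))\neq0$; let $q^1,q^2,q^3$ be the dual basis, so $q_i\times q_j=V_0\,q^k$ for $(i,j,k)\in cr(1,2,3)$ and $e_z=\sum_\ell c_\ell q_\ell$ with $c_\ell=(q^\ell\cdot e_z)$. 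Expanding both sides of the previous display in the dual basis and matching the coefficients of $q^j$ and $q^k$ gives, for $(i,j,k)\in cr(1,2,3)$, the clean relations $\omega^2\cos\theta_i\,c_j=m_jU'(\cos\sigma_{ij})$ and $\omega^2\cos\theta_i\,c_k=m_kU'(\cos\sigma_{ik})$; as $(i,j,k)$ runs over $cr(1,2,3)$ this produces one such relation for each of the six ordered pairs.

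From these relations the conclusion is almost immediate. Because each $U'(\cos\sigma_{ij})\neq0$, the relations force $\omega\neq0$ and $\cos\theta_i\neq0$, $c_j\neq0$ for every index. Pairing the relation for $(i,j)$ with the one for $(j,i)$ eliminates $U'(\cos\sigma_{ij})$ and gives $m_i\cos\theta_i\,c_j=m_j\cos\theta_j\,c_i$, whence $c_\ell/(m_\ell\cos\theta_\ell)$ is a common nonzero constant $\kappa$; substituting $c_j=\kappa m_j\cos\theta_j$ back yields $U'(\cos\sigma_{ij})=\omega^2\kappa\cos\theta_i\cos\theta_j$. Multiplying by the remaining cosine shows $U'(\cos\sigma_{ij})\cos\theta_k=\omega^2\kappa\cos\theta_1\cos\theta_2\cos\theta_3$ for all $(i,j,k)\in cr(1,2,3)$, which is precisely the chain of equalities in \eqref{conditionForLagrangian2}; it is nonzero because $U'(\cos\sigma_{ij})\neq0$ and $\cos\theta_k\neq0$. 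I expect the main obstacle to be the bookkeeping in the dual-basis step, keeping the cyclic roles of $(i,j,k)$ and the two matched coefficients straight, rather than any conceptual difficulty: the passage to the vector form and the use of $V_0\neq0$ are exactly what turn an otherwise messy trigonometric elimination into a short linear-algebra argument.
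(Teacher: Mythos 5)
Your argument for \eqref{conditionForLagrangian1} is essentially the paper's: both exploit the fact that $U'$ has definite sign so that one vanishing product in \eqref{eqforphi} forces all three to vanish, contradicting non-collinearity. For \eqref{conditionForLagrangian2}, however, you take a genuinely different and correct route. The paper multiplies the chain \eqref{eqforphi} by $\cos\theta_3$ and invokes the principal-axis identity $I_{xz}=I_{yz}=0$ in the form \eqref{alternative} to swap $m_3\sin\theta_3\cos\theta_3\sin(\phi_2-\phi_3)$ for $m_1\sin\theta_1\cos\theta_1\sin(\phi_1-\phi_2)$; the cancellation is then immediate, but the proof leans on the angular-momentum machinery of Section 2. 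You instead write the constrained Newton equation in vector form, cross with $q_i$, and expand in the dual basis of the linearly independent $q_1,q_2,q_3$ (linear independence is exactly non-collinearity, so $V_0\ne 0$ is legitimate), obtaining $\omega^2\cos\theta_i\,c_j=m_jU'(\cos\sigma_{ij})$ for all six ordered pairs; I checked the coefficient matching and it is right. This buys more than the proposition asks for: eliminating $c_\ell$ gives $U'(\cos\sigma_{ij})=\omega^2\kappa\cos\theta_i\cos\theta_j$, which is the paper's later condition \eqref{equivalentlagrange} (and $1=(e_z\cdot e_z)=\kappa\sum_\ell m_\ell\cos^2\theta_\ell$ would even identify $\gamma=\omega^2/\sum_k m_k\cos^2\theta_k$), and the relation $c_\ell=\kappa m_\ell\cos\theta_\ell$, i.e.\ $e_z=\kappa\sum_\ell m_\ell(q_\ell\cdot e_z)q_\ell$, is precisely the statement $Ie_z=(M-1/\kappa)e_z$, so you re-derive the principal-axis property rather than assume it. The cost is the setup of the tangential vector equation and the dual-basis bookkeeping, but the payoff is a self-contained argument that also delivers $\omega\ne0$ and $\cos\theta_k\ne0$ en route, whereas the paper handles $\cos\theta_k\ne0$ by a separate short contradiction. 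No gaps.
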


\begin{proof}
From \eqref{eqforphi}, 
since we are assuming 
$U'(\cos \sigma_{ij})$ has definite sign for all $i,j$, then if
$\sin\theta_i\sin\theta_j\sin(\phi_i-\phi_j)=0$ for one pair $(i,j)$,
then the same happen for the others $(i,j)$
which implies that the three bodies must be on the same meridian.
Therefore, by the definition of $LRE$, we obtain \eqref{conditionForLagrangian1}.
Now, by the definition of spherical coordinates $\sin\theta_k>0$,
and using again that $U'(\cos\sigma_{ij})$
has definite sign, we obtain that
all $\sin(\phi_i-\phi_j)$ must have the same sign.
If one configuration with
\begin{equation}\label{signOfDeltaPhi}
\sin(\phi_i-\phi_j)
< 0 \quad 
\mbox{ for } \quad (i,j) \in (1,2), (2,3), (3,1),
\end{equation}
satisfies \eqref{eqOfMotForTheta} and \eqref{eqforphi},
then the configuration that has opposite orientation, 
that is $(\phi_i-\phi_j)\to -(\phi_i-\phi_j)$ and 
$\sin(\phi_i-\phi_j)>0$
also satisfies \eqref{eqOfMotForTheta} and \eqref{eqforphi}.
For concreteness, we assume \eqref{signOfDeltaPhi} in the following.

Multiplying by $\cos(\theta_3)$ both sides of
the first and the second expressions in equation (\ref{eqforphi}),
we obtain
\begin{equation*}
\begin{split}
&m_1m_2U'(\cos \sigma_{12})\sin\theta_1\sin\theta_2\sin(\phi_1-\phi_2)\cos\theta_3\\
&=m_2U'(\cos \sigma_{23})\sin\theta_2
(m_3\sin\theta_3\cos\theta_3\sin(\phi_2-\phi_3))\\
&=m_2U'(\cos \sigma_{23})\sin\theta_2
	(m_1\sin\theta_1\cos\theta_1\sin(\phi_1-\phi_2)).
\end{split}
\end{equation*}
To get the last line, we have used (\ref{alternative}) for $k=2$.
Dividing the first and the last line by
$m_1m_2\sin\theta_1\sin\theta_2\sin(\phi_1-\phi_2)\ne 0$,
we obtain $U'(\cos \sigma_{12})\cos\theta_3=U'(\cos \sigma_{23})\cos\theta_1$.
Similarly, we  obtain, the last part of equation  \eqref{conditionForLagrangian2}.

Again, if one $\cos\theta_k=0$, then all $\cos\theta_k$ must be zero since $U'(\cos \sigma_{ij})\ne 0$ and then 
the configuration is a collinear configuration on the Equator.
Therefore, for the $LRE$ we must have
$\cos\theta_k\ne 0$.
Then by using similar arguments as above, the common value in \eqref{conditionForLagrangian2} is not zero. This finishes the proof of Proposition \ref{propLRE}.
\end{proof}

For a $LRE$, we define the rotation axis as the $z$--axis.
Therefore, the equator is also defined.
Then, we obtain the following result.
\begin{cor}
\label{cosMustHaveTheSameSign}
The three bodies for $LRE$ are all on the northern hemisphere
or all on the southern hemisphere.
\end{cor}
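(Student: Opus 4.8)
The plan is to read the claim off directly from the necessary condition \eqref{conditionForLagrangian2} supplied by Proposition \ref{propLRE}. With the rotation axis taken as the $z$-axis, we have $\cos\theta_k=(q_k\cdot e_z)$, so a body lies in the northern hemisphere exactly when $\cos\theta_k>0$ and in the southern hemisphere when $\cos\theta_k<0$. Hence the corollary is equivalent to the statement that $\cos\theta_1,\cos\theta_2,\cos\theta_3$ all share a common sign, and this is what I would aim to establish.

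First I would invoke the standing hypothesis on the potential, namely that $U'(\cos\sigma_{ij})$ is continuous and of definite sign on $(0,\pi)$. This forces the three numbers $U'(\cos\sigma_{12})$, $U'(\cos\sigma_{23})$, $U'(\cos\sigma_{31})$ to be simultaneously positive (the attractive case) or simultaneously negative (the repulsive case). Next I would use that \eqref{conditionForLagrangian2} asserts that the chain $U'(\cos\sigma_{12})\cos\theta_3=U'(\cos\sigma_{23})\cos\theta_1=U'(\cos\sigma_{31})\cos\theta_2$ takes a single common value, call it $C$, with $C\ne0$.

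Solving each equality for the corresponding cosine yields $\cos\theta_1=C/U'(\cos\sigma_{23})$, $\cos\theta_2=C/U'(\cos\sigma_{31})$, and $\cos\theta_3=C/U'(\cos\sigma_{12})$. Because the three denominators all carry the same sign and $C$ is one fixed nonzero constant, the three cosines all carry the sign of $C$ in the attractive case and the sign of $-C$ in the repulsive case; in either situation they agree in sign and are nonzero. I would then conclude that the three bodies lie strictly in a single hemisphere, which is precisely the assertion of the corollary.

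I do not anticipate a genuine obstacle here, as the result is an immediate consequence of \eqref{conditionForLagrangian2}. The only point demanding care is that the common value $C$ is truly nonzero, so that none of the $\cos\theta_k$ vanishes and no body sits on the equator; this nondegeneracy is already secured in the final paragraph of the proof of Proposition \ref{propLRE}, and it is exactly what excludes a mixed or equatorial configuration.
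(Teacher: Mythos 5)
Your argument is correct and is essentially the paper's own proof: both read the claim directly off the chain of equalities \eqref{conditionForLagrangian2} together with the definite sign of $U'(\cos\sigma_{ij})$, concluding that the $\cos\theta_k$ share a common nonzero sign. Your extra care about the common value $C$ being nonzero is a point the paper already settles at the end of the proof of Proposition \ref{propLRE}, so nothing further is needed.
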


\begin{proof}
Since we are assuming 
that $U'(\cos\sigma_{ij})$
has the same sign for all $(i,j)$, we obtain that
the three quantities $\cos\theta_k$ must have the same sign
by (\ref{conditionForLagrangian2}).
\end{proof}

Again, if a configuration with
\begin{equation}\label{positivecosine}
\cos \theta_k > 0 \quad \text{for all} \quad k=1,2,3,
\end{equation}
satisfies the equations of motion \eqref{eqforphi} and \eqref{eqOfMotForTheta}, then
the configuration with $\theta_k \to \pi-\theta_k$ that has $\cos\theta_k<0$
also satisfies the same equations.
For concreteness, we assume \eqref{positivecosine} in the following.

\begin{remark}
The choice $\cos\theta_k>0$ for all $k=1,2,3$ joint with the fact that
$\sin\phi_{ij}$ has the same sign for 
$(i,j) \in (1,2), (2,3),(3,1)$
means that the north pole is inside of the smaller region 
surrounded by the minor 
geodesics connecting the three bodies.
See Figures \ref{figisoscelesSigma12EqPiDiv3and2PiDiv3} and 
\ref{figIsoscelesSigma12EqPiDiv6}.
\end{remark}

\begin{remark}
Corresponding to the choice of the sign for 
$\cos\theta_k$ and $\sin(\phi_i-\phi_j)$,
there are four $LRE$ with the same 
arc angles $\sigma_{ij}$.
\end{remark}

Now we give an important necessary conditions
for $LRE$.

\begin{proposition}
The necessary conditions
to have a $LRE$ are 
\begin{equation}
\label{equivalentlagrange}
\frac{U'(\cos \sigma_{12})}{\cos\theta_1 \cos\theta_2} =
\frac{U'(\cos \sigma_{23})}{\cos\theta_2 \cos\theta_3} =
\frac{U'(\cos \sigma_{31})}{\cos\theta_3 \cos\theta_1} =
\frac{\omega^2}{\sum_k m_k \cos^2 \theta_k}.
\end{equation}
\end{proposition}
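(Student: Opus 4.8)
The first three equalities come essentially for free from the already-established necessary condition \eqref{conditionForLagrangian2}. Since Proposition \ref{propLRE} guarantees $\cos\theta_k\neq 0$ for every $k$, the product $\cos\theta_1\cos\theta_2\cos\theta_3$ is nonzero, and dividing each of the three equal quantities in $U'(\cos\sigma_{12})\cos\theta_3=U'(\cos\sigma_{23})\cos\theta_1=U'(\cos\sigma_{31})\cos\theta_2$ by this common product immediately yields $U'(\cos\sigma_{12})/(\cos\theta_1\cos\theta_2)=U'(\cos\sigma_{23})/(\cos\theta_2\cos\theta_3)=U'(\cos\sigma_{31})/(\cos\theta_3\cos\theta_1)$. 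So the entire remaining content of the proposition reduces to identifying this common ratio with $\omega^2/\sum_k m_k\cos^2\theta_k$.

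To obtain the last equality I would introduce the common value $\mu$ defined by $U'(\cos\sigma_{ij})=\mu\,\cos\theta_i\cos\theta_j$ for each pair, which is precisely the content of the first three equalities just proved. The plan is then to feed this relation into the $\theta$-equation of motion \eqref{eqOfMotForTheta}. Substituting $U'(\cos\sigma_{ij})=\mu\cos\theta_i\cos\theta_j$ into the equation for mass $i$ and cancelling the nonzero factor $m_i\cos\theta_i$ from both sides turns it into $\omega^2\sin\theta_i=\mu\sum_{j\neq i}m_j\cos\theta_j\big(\sin\theta_i\cos\theta_j-\cos\theta_i\sin\theta_j\cos(\phi_i-\phi_j)\big)$, which splits into the clean term $\mu\sin\theta_i\sum_{j\neq i}m_j\cos^2\theta_j$ and a cross term involving the factors $\sin\theta_j\cos\theta_j\cos(\phi_i-\phi_j)$.

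The decisive step is to simplify that cross term using the alternative form \eqref{alternative} of the condition $I_{xz}=I_{yz}=0$. Taking the first component of \eqref{alternative} with $k=i$ and isolating the self-term $j=i$ (where $\cos(\phi_i-\phi_i)=1$) gives $\sum_{j\neq i}m_j\sin\theta_j\cos\theta_j\cos(\phi_i-\phi_j)=-m_i\sin\theta_i\cos\theta_i$. Substituting this identity converts the cross term $-\cos\theta_i\sum_{j\neq i}m_j\sin\theta_j\cos\theta_j\cos(\phi_i-\phi_j)$ into $+\,m_i\sin\theta_i\cos^2\theta_i$, which is exactly the missing $j=i$ summand that promotes $\sum_{j\neq i}m_j\cos^2\theta_j$ to the full sum $\sum_j m_j\cos^2\theta_j$. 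The right-hand side then collapses to $\mu\sin\theta_i\sum_j m_j\cos^2\theta_j$, and dividing by $\sin\theta_i\neq 0$ (guaranteed by \eqref{conditionForLagrangian1}) produces $\omega^2=\mu\sum_j m_j\cos^2\theta_j$, i.e. $\mu=\omega^2/\sum_k m_k\cos^2\theta_k$, as required.

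I expect the main obstacle to be purely the bookkeeping in this decisive step: one must remember that \eqref{alternative} is a sum over \emph{all} $j$ including $j=i$, so the self-term is precisely what is needed to complete the partial sum, and one must track the minus sign carefully when moving the $j=i$ term across the equality. Everything else is routine substitution and cancellation, and the calculation is uniform in $i$, so carrying it out for a single $i$ suffices.
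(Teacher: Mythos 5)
Your proposal is correct and follows the same route as the paper: derive the common ratio from \eqref{conditionForLagrangian2}, substitute $U'(\cos\sigma_{ij})=\mu\cos\theta_i\cos\theta_j$ into \eqref{eqOfMotForTheta}, and read off $\mu=\omega^2/\sum_k m_k\cos^2\theta_k$. The paper states the substitution step in one line without detail; your explicit use of the $k=i$ instance of \eqref{alternative} to absorb the cross term into the full sum $\sum_j m_j\cos^2\theta_j$ is exactly the bookkeeping the paper leaves implicit, and it is carried out correctly.
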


\begin{proof}
By the equation (\ref{conditionForLagrangian2}),
let be $U'(\cos \sigma_{ij}) = \gamma \cos\theta_i\cos\theta_j$
with common function $\gamma$.
Substituting this expression into ``the equations of motion'' \eqref{eqOfMotForTheta},
we obtain  the value 
$\gamma = \omega^2/(\sum_k m_k \cos^2 \theta_k)$.
\end{proof}

\begin{theorem}\label{theCoditionInSigmaAndTheta}
The necessary and sufficient conditions 
for three bodies to form a $LRE$
on $\mathbb{S}^2$ are
$\sin\theta_k\ne 0$, $\cos\theta_k\ne 0$,
and \eqref{equivalentlagrange},
where $\cos\theta_k=(q_k\cdot e_z)$
and $e_z$ is one of the eigenvectors of $I$.
\end{theorem}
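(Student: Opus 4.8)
The plan is to prove the two implications separately. The forward direction (necessity) is essentially a matter of assembling what has already been established: Proposition \ref{propLRE} yields $\sin\theta_k\neq 0$ and $\cos\theta_k\neq 0$, the proposition immediately preceding this theorem yields \eqref{equivalentlagrange}, and Lemma \ref{rotationAxis} guarantees that the rotation axis $e_z$ is a principal axis of $I$. So I would simply collect these to conclude that every $LRE$ satisfies the listed conditions.

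The substance lies in the converse. Assume a non-collinear configuration for which $e_z$ is an eigenvector of $I$, with $\sin\theta_k\neq0$, $\cos\theta_k\neq0$, and \eqref{equivalentlagrange} holding. From \eqref{equivalentlagrange} I would introduce the common value $\gamma=\omega^2/\big(\sum_k m_k\cos^2\theta_k\big)$ and write $U'(\cos\sigma_{ij})=\gamma\cos\theta_i\cos\theta_j$, so that checking the equations of motion \eqref{eqOfMotForTheta} and \eqref{eqforphi} reduces to a single substitution. The crucial extra input is that $e_z$ being a principal axis is, by Lemma \ref{IxzIyzEq0}, equivalent to $I_{xz}=I_{yz}=0$, which is precisely the pair of scalar identities packaged in \eqref{alternative}; isolating the $j=i$ term of its cosine component (taken at $k=i$) gives $\sum_{j\neq i}m_j\sin\theta_j\cos\theta_j\cos(\phi_i-\phi_j)=-m_i\sin\theta_i\cos\theta_i$.

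For \eqref{eqOfMotForTheta} I would substitute the factorization of $U'$, pull out $\gamma m_i\cos\theta_i$, and split the sum into a $\sum_{j\neq i}m_j\cos^2\theta_j$ piece and a piece containing $\cos(\phi_i-\phi_j)$; replacing the latter by the cosine identity above collapses the bracket to $\sin\theta_i\sum_{j}m_j\cos^2\theta_j$, and the definition of $\gamma$ returns exactly $\omega^2 m_i\sin\theta_i\cos\theta_i$, the left-hand side. For \eqref{eqforphi}, writing $a_i=m_i\sin\theta_i\cos\theta_i$, the same substitution turns each of the three expressions into $\gamma\,a_i a_j\sin(\phi_i-\phi_j)$; the sine component of \eqref{alternative} gives $a_2\sin(\phi_1-\phi_2)=a_3\sin(\phi_3-\phi_1)$ and $a_3\sin(\phi_2-\phi_3)=a_1\sin(\phi_1-\phi_2)$, and multiplying each by the one missing mass factor yields $\gamma a_1a_2\sin(\phi_1-\phi_2)=\gamma a_2a_3\sin(\phi_2-\phi_3)=\gamma a_3a_1\sin(\phi_3-\phi_1)$, which is \eqref{eqforphi}. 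Hence both equations of motion hold and the configuration is a $RE$; being non-collinear, it is a $LRE$.

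The feature I would emphasize, and the only place any care is needed, is the clean matching between the two scalar components of the principal-axis condition \eqref{alternative} and the two equations of motion: the cosine component drives the polar equation \eqref{eqOfMotForTheta}, while the sine component drives the azimuthal equation \eqref{eqforphi}. The remaining delicacy is bookkeeping of nondegeneracy. Here $\cos\theta_k\neq0$, with a common sign by Corollary \ref{cosMustHaveTheSameSign}, keeps $\gamma$ and the ratios in \eqref{equivalentlagrange} finite and the $U'(\cos\sigma_{ij})$ nonzero, while $\sin\theta_k\neq0$ together with non-collinearity keeps $\sin(\phi_i-\phi_j)\neq0$, so the configuration is genuinely of Lagrange type rather than a meridian $ERE$. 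I do not expect any serious obstacle beyond organizing these substitutions.
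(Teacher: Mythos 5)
Your proposal is correct and follows essentially the same route as the paper: necessity by collecting Proposition \ref{propLRE}, the preceding proposition, and Lemma \ref{rotationAxis}; sufficiency by substituting $U'(\cos\sigma_{ij})=\gamma\cos\theta_i\cos\theta_j$ into \eqref{eqOfMotForTheta} and \eqref{eqforphi} and invoking the principal-axis condition in the form \eqref{alternative}. The only difference is that you carry out explicitly the verification the paper leaves to the reader, correctly matching the cosine component of \eqref{alternative} to the polar equation and the sine component to the azimuthal one.
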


\begin{proof}
The necessity was already shown.
Inversely,
put $U'(\cos\sigma_{ij})$ in \eqref{equivalentlagrange}
into the equations of motion \eqref{eqOfMotForTheta}
and \eqref{eqforphi}.
Then, using $I_{xz}=I_{yz}=0$ in the form given by equation \eqref{alternative},
we can verify that the equations
\eqref{eqOfMotForTheta} and \eqref{eqforphi}
are satisfied.
\end{proof}

We have seen 
that the rotation axis for the $LRE$ is one of the three principal axes of the matrix $I$, the next result 
tells us which one is exactly the rotation axis.

\begin{proposition}\label{prop-eigen}
Taking the $z$-axis as the rotation axis for a $LRE$,
the corresponding eigenvector $\Psi$ of $J$ is given by
\begin{equation}\label{the-eigenvector}
\Psi_L = \left( \frac{\sqrt{m_1}}{U'(\cos \sigma_{23})}, \frac{\sqrt{m_2}}{U'(\cos \sigma_{31})}, \frac{\sqrt{m_3}}{U'(\cos \sigma_{12})} \right)
/
\left( \sum m_k/(U'(\cos \sigma_{ij}))^2 \right)^{1/2},
\end{equation}
where the sum runs for $(i,j,k)\, \in \, cr(1,2,3).$
\end{proposition}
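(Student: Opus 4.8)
The plan is to read off $\Psi_L$ directly from Theorem~\ref{threeEquivalentStatements} once the components $\cos\theta_k$ have been re-expressed through the necessary condition \eqref{conditionForLagrangian2}. By Lemma~\ref{rotationAxis} the rotation axis $e_z$ of the $LRE$ is an eigenvector of $I$, so Theorem~\ref{threeEquivalentStatements} applies and tells us that the eigenvector of $J$ associated with this axis is, before normalization, the column vector $(\sqrt{m_1}\cos\theta_1,\sqrt{m_2}\cos\theta_2,\sqrt{m_3}\cos\theta_3)^T$. Thus it suffices to show that this vector is parallel to the unnormalized vector appearing in \eqref{the-eigenvector}.

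First I would invoke condition \eqref{conditionForLagrangian2} from Proposition~\ref{propLRE}, which for an $LRE$ guarantees a common nonzero value, say
\[
\kappa := U'(\cos\sigma_{23})\cos\theta_1 = U'(\cos\sigma_{31})\cos\theta_2 = U'(\cos\sigma_{12})\cos\theta_3 \neq 0.
\]
Because $\kappa\neq 0$ and every $U'(\cos\sigma_{ij})\neq 0$, I can solve for the cosines: $\cos\theta_1 = \kappa/U'(\cos\sigma_{23})$, $\cos\theta_2 = \kappa/U'(\cos\sigma_{31})$, and $\cos\theta_3 = \kappa/U'(\cos\sigma_{12})$. Substituting these into the eigenvector of Theorem~\ref{threeEquivalentStatements} factors out the scalar $\kappa$ and leaves the direction $\left(\sqrt{m_1}/U'(\cos\sigma_{23}),\,\sqrt{m_2}/U'(\cos\sigma_{31}),\,\sqrt{m_3}/U'(\cos\sigma_{12})\right)^T$, which is exactly the numerator in \eqref{the-eigenvector}.

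It then remains to check the normalization. The eigenvector $\Psi$ of Theorem~\ref{threeEquivalentStatements} is a unit vector, and the proposed $\Psi_L$ divides the same direction by its Euclidean norm, namely $\left(\sum m_k/(U'(\cos\sigma_{ij}))^2\right)^{1/2}$ with $(i,j,k)\in cr(1,2,3)$. Since parallel unit vectors coincide up to sign, and an eigenvector is in any case only determined up to sign, $\Psi_L$ is precisely the unit eigenvector of $J$ belonging to the rotation axis, which is the claim.

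The computation is entirely elementary; the only point demanding care is the cyclic bookkeeping of indices. The $k$-th entry of $\Psi_L$ must carry $1/U'(\cos\sigma_{ij})$ for the pair $(i,j)$ opposite to the mass $k$, i.e.\ with $(i,j,k)\in cr(1,2,3)$, and I expect the main risk to be a mismatch between this assignment and the cyclic pattern written in \eqref{conditionForLagrangian2}. Verifying that the two patterns agree term by term, and likewise that the normalization sum ranges over the correct triples, is the real content of the argument.
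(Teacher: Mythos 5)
Your proof is correct and follows essentially the same route as the paper: both invoke the common nonzero value in \eqref{conditionForLagrangian2} (your $\kappa$, the paper's $\delta$) to write $\cos\theta_k=\kappa/U'(\cos\sigma_{ij})$ and then substitute into the eigenvector $\Psi$ of Theorem~\ref{threeEquivalentStatements}. Your version merely spells out the normalization and the cyclic index bookkeeping that the paper leaves implicit.
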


\begin{proof} 
By  equation 
\eqref{conditionForLagrangian2}, $\cos \theta_k = \delta  /U'(\cos \sigma_{ij})$,
with $\delta \ne 0$.
Thus, the normalized eigenvector 
$\Psi$
in Theorem \ref{threeEquivalentStatements} is given by the equation \eqref{the-eigenvector}.
\end{proof}

\begin{remark}
Note that $J$ and $\Psi_L$ are functions 
of the set $\{m_k, \cos\sigma_{ij}\}$.
Therefore, the eigenvalue problem $J\Psi_L=\lambda\Psi_L$
gives the explicit necessary condition for the set.
We will show that this is also the sufficient condition for the set.
\end{remark}

\begin{remark}
Even when $J$ is degenerated, ``the equations of motion'' determine
the rotation axis.
For example for the case $m_k=m$ and $\sigma_{ij}=\pi/2$,
$J$ is $2m$ times the identity matrix. 
Therefore, any direction is a principal axis of $J$.
But,  Proposition \ref{prop-eigen} states that
``the equations of motion''
determine $\Psi_L=(1,1,1)/\sqrt{3}$,
which corresponds to $\cos\theta_k=1/\sqrt{3}$.
\end{remark}

\begin{cor}
For a $LRE$ the angular velocity $\omega$ is given by  
\begin{equation}\label{omega}
\omega^2 = U'(\cos \sigma_{12})U'(\cos \sigma_{23})U'(\cos \sigma_{31}) \left( \sum m_k/(U'(\cos \sigma_{ij}))^2 \right). 
\end{equation}
\end{cor}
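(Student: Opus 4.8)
The plan is to read off the angular velocity directly from the two necessary conditions already established, namely \eqref{conditionForLagrangian2} and the chain of equalities \eqref{equivalentlagrange}, since together they pin down both the $\cos\theta_k$ (up to a common scale) and the ratio that equals $\omega^2/\sum_k m_k\cos^2\theta_k$. The only thing to verify is that the unknown scale cancels, leaving a closed formula in the data $\{m_k,\cos\sigma_{ij}\}$ alone.

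First I would recall, exactly as in the proof of Proposition \ref{prop-eigen}, that \eqref{conditionForLagrangian2} lets me write
\begin{equation*}
\cos\theta_k=\frac{\delta}{U'(\cos\sigma_{ij})},\qquad (i,j,k)\in cr(1,2,3),
\end{equation*}
for a common nonzero constant $\delta$. I would then take the first and last members of \eqref{equivalentlagrange}, which give
\begin{equation*}
\omega^2=\frac{U'(\cos\sigma_{12})}{\cos\theta_1\cos\theta_2}\,\sum_k m_k\cos^2\theta_k .
\end{equation*}
Substituting $\cos\theta_1=\delta/U'(\cos\sigma_{23})$ and $\cos\theta_2=\delta/U'(\cos\sigma_{31})$ into the prefactor yields
\begin{equation*}
\frac{U'(\cos\sigma_{12})}{\cos\theta_1\cos\theta_2}
=\frac{U'(\cos\sigma_{12})U'(\cos\sigma_{23})U'(\cos\sigma_{31})}{\delta^2},
\end{equation*}
while substituting $\cos^2\theta_k=\delta^2/(U'(\cos\sigma_{ij}))^2$ into the sum gives
\begin{equation*}
\sum_k m_k\cos^2\theta_k=\delta^2\sum_k\frac{m_k}{(U'(\cos\sigma_{ij}))^2},
\end{equation*}
both with the cyclic convention $(i,j,k)\in cr(1,2,3)$.

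Multiplying the last two displays, the factor $\delta^2$ cancels and I obtain precisely \eqref{omega}. There is no genuine obstacle here: the argument is a substitution followed by a cancellation, and the result is a corollary in the literal sense, an immediate consequence of \eqref{equivalentlagrange}. The one point demanding care is the bookkeeping of the cyclic indexing, i.e.\ making sure each $\cos\theta_k$ is paired with the arc angle $\sigma_{ij}$ opposite to it as dictated by \eqref{conditionForLagrangian2}; getting that correspondence right is exactly what makes the $\delta^2$ cancel cleanly and produces the symmetric product $U'(\cos\sigma_{12})U'(\cos\sigma_{23})U'(\cos\sigma_{31})$ times the weighted sum. I would close by remarking that the independence of $\omega^2$ from $\delta$ is expected, since $\delta$ only encodes the overall scale (and the hemisphere choice) of the configuration, which should not affect the angular speed determined by the shape.
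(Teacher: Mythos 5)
Your proposal is correct and follows essentially the same route as the paper, which proves the corollary by combining \eqref{equivalentlagrange} with the fact that $\Psi=\Psi_L$, i.e.\ with the proportionality $\cos\theta_k\propto 1/U'(\cos\sigma_{ij})$ coming from \eqref{conditionForLagrangian2}; you have simply written out the substitution and the cancellation of $\delta^2$ explicitly. The only caveat is your closing remark: $\delta$ is not a free overall scale but is fixed by the eigenvalue via $\delta=\sqrt{M-\lambda}\big/\bigl(\sum m_k/(U'(\cos\sigma_{ij}))^2\bigr)^{1/2}$ as in \eqref{cosThetaII}, though this does not affect the validity of the argument.
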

\begin{proof}
The equation \eqref{equivalentlagrange} and $\Psi=\Psi_L$ 
yields this expression.
\end{proof}

Now, we have a sufficient amount of information  to establish a sufficient condition for a LRE.

\begin{theorem}\label{ConditonForLRE}
The necessary and sufficient condition
for the set $\{m_k, \cos\sigma_{ij}\}$
to form a $LRE$ is that the matrix $J$ has the eigenvector $\Psi_L$ given by \eqref{the-eigenvector}.
\end{theorem}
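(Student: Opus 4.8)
The plan is to prove the two implications separately, reducing each to results already in hand, so that the statement becomes a clean algebraic criterion on the data $\{m_k,\cos\sigma_{ij}\}$ alone; recall that both $J$ in \eqref{defJ} and $\Psi_L$ in \eqref{the-eigenvector} are built only from this data. The necessity is essentially already established: if $\{m_k,\cos\sigma_{ij}\}$ forms a $LRE$, take the rotation axis as the $z$--axis, which by Lemma \ref{rotationAxis} is an eigenvector of $I$; Theorem \ref{threeEquivalentStatements} then says the normalized vector with components $\sqrt{m_k}\cos\theta_k$ is an eigenvector of $J$, and Proposition \ref{prop-eigen} identifies this eigenvector with $\Psi_L$ through the relation $\cos\theta_k=\delta/U'(\cos\sigma_{ij})$ coming from \eqref{conditionForLagrangian2}. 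Hence $\Psi_L$ is an eigenvector of $J$.

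For the sufficiency I would run this chain in reverse. Assume $J\Psi_L=\lambda\Psi_L$ for some eigenvalue $\lambda$. By Theorem \ref{PsitoEz} the corresponding $e_z$ is an eigenvector of $I$, and by Lemma \ref{cosFromPsi} the direction cosines are recovered through $\sqrt{m_k}\cos\theta_k=\sqrt{M-\lambda}\,(\Psi_L)_k$. Reading off the components of \eqref{the-eigenvector} gives $\cos\theta_k=\delta/U'(\cos\sigma_{ij})$ for $(i,j,k)\in cr(1,2,3)$ with the single constant $\delta=\sqrt{M-\lambda}/N$, where $N$ is the normalizing factor, i.e. exactly \eqref{conditionForLagrangian2}. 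Substituting $U'(\cos\sigma_{ij})=\delta/\cos\theta_k$ into the first three ratios of \eqref{equivalentlagrange} makes each equal to $\delta/(\cos\theta_1\cos\theta_2\cos\theta_3)$, so those equalities hold automatically; I then define $\omega^2$ through the last equality of \eqref{equivalentlagrange}. With $\sin\theta_k\ne 0$ and $\cos\theta_k\ne 0$ secured, Theorem \ref{theCoditionInSigmaAndTheta} delivers the $LRE$.

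The delicate point, and the main obstacle, is the non-degeneracy needed to invoke Theorem \ref{theCoditionInSigmaAndTheta}. First, $\cos\theta_k\ne 0$ is equivalent to $\delta\ne 0$, i.e. to $\lambda<M$; by the Remark following Lemma \ref{IxzIyzEq0} this is precisely the statement that the mass distribution is non-collinear, which is exactly what a $LRE$ demands, so the hypothesis should be read together with the triangle being genuine. Second, $\sin\theta_k\ne 0$ must be verified, i.e. no body sits on the rotation axis; this body-at-a-pole degeneracy has to be excluded for the reconstructed configuration. I would argue that realizability is not a further worry: because $I$ and $J$ are similar by Lemma \ref{ForThreeEigenvectorsII}, any eigenvector of $J$ corresponds to a genuine principal axis $e_\alpha$ of the actual configuration fixed by $\{m_k,\cos\sigma_{ij}\}$, and the $\cos\theta_k$ produced by Lemma \ref{cosFromPsi} are the true direction cosines $(q_k\cdot e_\alpha)$. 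Consequently the reconstructed angles are automatically consistent with the given $\cos\sigma_{ij}$, and only the two sign and degeneracy conditions above genuinely require attention.
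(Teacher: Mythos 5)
Your proposal is correct and follows essentially the same route as the paper's own proof: necessity via Proposition \ref{prop-eigen}, and sufficiency by running Theorem \ref{PsitoEz} and Lemma \ref{cosFromPsi} to recover $\cos\theta_k=\delta/U'(\cos\sigma_{ij})$, choosing $\omega^2$ to satisfy \eqref{equivalentlagrange}, and closing with Theorem \ref{theCoditionInSigmaAndTheta}. Your explicit flagging of the non-degeneracy requirements $\cos\theta_k\ne 0$ (i.e.\ $\lambda<M$) and $\sin\theta_k\ne 0$ is somewhat more careful than the paper, which passes over these silently, but it does not change the argument.
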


\begin{proof}
The necessity was already shown in Proposition \ref{prop-eigen}. 

Inversely, if $\Psi_L$ in \eqref{the-eigenvector}
is an eigenvector of $J$,
then identify $\Psi_L=\Psi$,
by Theorem \ref{PsitoEz},
the corresponding $z$-axis is one of the eigenvectors of $I$.
The quantities $\cos \theta_k$ are uniquely determined by  \eqref{cosTheta} as
\begin{equation}\label{cosThetaII}
\cos\theta_k =  \sqrt{(M-\lambda)}/
\left(U'(\cos \sigma_{ij})
\sqrt{\sum_{i'j'k'} m_{k'}/(U'(\cos \sigma_{i'j'}))^2}
\right).
\end{equation}

Now, let $\omega^2$ be equal to the value in \eqref{omega}.
Then, we can easily verify  that equations \eqref{equivalentlagrange} are satisfied.
\end{proof}

\begin{remark}
As shown above, if $\{m_k, \cos\sigma_{ij}\}$ turns out to be able to form a $LRE$,
$\cos\theta_k$ and $\omega^2$ are uniquely determined by 
 $\sigma_{ij}$. Then, $\phi_i-\phi_j$ are also determined
by \eqref{fundamentalrelation} and \eqref{signOfDeltaPhi}.
Thus, one $LRE$ is determined.
Of course, there are another three $LRE$ that have the same 
$\cos\sigma_{ij}$
corresponding to the choice of the sign of $\sin(\phi_i-\phi_j)$ and $\cos\theta_k$.
\end{remark}

Before closing this section, we give two corollaries
and a trivial proposition.

\begin{cor}[Equilateral $LRE$ requires equal masses]\label{equilateralequal}
An equilateral triangle is a $LRE$ if and only if the masses are equal and the angles $\theta_k$ are equal
\cite{Diacu-EPC1}.
\end{cor}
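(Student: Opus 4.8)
The plan is to prove the biconditional ``equilateral triangle is a $LRE$ iff equal masses and equal $\theta_k$'' by reducing everything to the necessary and sufficient condition for a $LRE$ already established, namely that $\Psi_L$ in \eqref{the-eigenvector} must be an eigenvector of $J$, combined with the chain of equalities \eqref{equivalentlagrange}. The equilateral hypothesis means $\sigma_{12}=\sigma_{23}=\sigma_{31}=:\sigma$, so all three $\cos\sigma_{ij}$ coincide and hence $U'(\cos\sigma_{ij})$ takes a single common value $u:=U'(\cos\sigma)\ne 0$.

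For the forward direction I would first substitute the equilateral condition into \eqref{conditionForLagrangian2}. Since $U'(\cos\sigma_{12})=U'(\cos\sigma_{23})=U'(\cos\sigma_{31})=u$, that equation becomes $u\cos\theta_3=u\cos\theta_1=u\cos\theta_2\ne 0$, and dividing by the nonzero $u$ gives $\cos\theta_1=\cos\theta_2=\cos\theta_3$ immediately; this is the claimed equality of the $\theta_k$ (all lying in the same hemisphere by Corollary \ref{cosMustHaveTheSameSign}). To extract equality of the masses I would then look at \eqref{the-eigenvector}: with all $U'(\cos\sigma_{ij})$ equal to $u$, the eigenvector forced by the equations of motion is $\Psi_L\propto(\sqrt{m_1},\sqrt{m_2},\sqrt{m_3})^T$. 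By Theorem \ref{ConditonForLRE} this must be an eigenvector of $J$, so I would compute $J\Psi_L$ using the form \eqref{defJ} with $\cos\sigma_{ij}=\cos\sigma$ and impose $J\Psi_L=\lambda\Psi_L$. The $i$-th component of $J\Psi_L$ is $(M-m_i)\sqrt{m_i}-\cos\sigma\sum_{j\ne i}\sqrt{m_im_j}\sqrt{m_j}=(M-m_i)\sqrt{m_i}-\cos\sigma\,\sqrt{m_i}\,(M-m_i)=(M-m_i)(1-\cos\sigma)\sqrt{m_i}$, so the eigenvalue condition reads $(M-m_i)(1-\cos\sigma)=\lambda$ for every $i$. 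Since $\sigma\in(0,\pi)$ gives $1-\cos\sigma\ne 0$, this forces $M-m_1=M-m_2=M-m_3$, i.e. $m_1=m_2=m_3$.

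For the converse I would simply check that equal masses together with equal $\theta_k$ satisfy the necessary and sufficient condition of Theorem \ref{ConditonForLRE}, or more directly that they satisfy \eqref{equivalentlagrange}: with $m_k=m$ and $\cos\theta_k=c$ the three left-hand ratios all equal $u/c^2$ and are therefore mutually equal, and one only needs the right-hand value $\omega^2/(\sum_k m_k\cos^2\theta_k)=\omega^2/(3mc^2)$ to coincide with $u/c^2$, which determines $\omega^2=3mu$ in agreement with the Corollary giving \eqref{omega}. Alternatively I can invoke the computation just done: when $m_1=m_2=m_3=m$ the vector $(\sqrt{m},\sqrt{m},\sqrt{m})^T$ is manifestly the eigenvector $\Psi_L$ of $J$, so by Theorem \ref{ConditonForLRE} the configuration is a $LRE$.

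I do not expect a genuine obstacle here; the statement is essentially a corollary of Theorem \ref{ConditonForLRE}. The only point requiring a little care is the algebraic simplification $\sum_{j\ne i}\sqrt{m_im_j}\sqrt{m_j}=\sqrt{m_i}(M-m_i)$ used in evaluating $J\Psi_L$, and the observation that $1-\cos\sigma\ne 0$ on $(0,\pi)$ is exactly what lets me cancel and conclude equality of the masses rather than merely a weaker relation. I would also note for completeness that the degenerate-looking case $\cos\sigma=0$ (i.e. $\sigma=\pi/2$) is still covered, since $1-\cos\sigma=1\ne 0$; this matches the earlier Remark discussing $m_k=m$, $\sigma_{ij}=\pi/2$ where the equations of motion still single out $\Psi_L=(1,1,1)/\sqrt{3}$.
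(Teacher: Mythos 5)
Your proposal is correct and follows essentially the same route as the paper: use \eqref{conditionForLagrangian2} to force $\cos\theta_1=\cos\theta_2=\cos\theta_3$, observe that $\Psi_L\propto(\sqrt{m_1},\sqrt{m_2},\sqrt{m_3})^T$, and impose $J\Psi_L=\lambda\Psi_L$ to get $(M-m_i)(1-\cos\sigma)=\lambda$ for each $i$, whence $m_1=m_2=m_3$ since $\cos\sigma\ne 1$. Your explicit verification of the converse is a small addition the paper leaves implicit, but the argument is the same.
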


\begin{proof}
Let $\sigma=\sigma_{12}=\sigma_{23}=\sigma_{31}$. By equation \eqref{conditionForLagrangian2} we have $\cos \theta_1 = \cos \theta_2 = \cos \theta_3$.
Therefore, the matrix $J$ has the eigenvector 
$(\sqrt{m_1},\sqrt{m_2},\sqrt{m_3})$, that is 
\begin{equation*}
\left(\begin{array}{ccc}
m_2+m_3 & -\sqrt{m_1m_2}\cos\sigma & -\sqrt{m_1m_3}\cos\sigma \\
-\sqrt{m_2m_1}\cos\sigma &m_3+m_1& -\sqrt{m_2m_3}\cos\sigma \\
-\sqrt{m_3m_1}\cos\sigma & -\sqrt{m_3m_2}\cos\sigma & m_1+m_2
\end{array}\right)
\!\!\!
\left(\begin{array}{c}
\sqrt{m_1}\\\sqrt{m_2}\\\sqrt{m_3}
\end{array}\right)
=\lambda
\!\!
\left(\begin{array}{c}
\sqrt{m_1}\\\sqrt{m_2}\\\sqrt{m_3}\end{array}\right).
\end{equation*}

This equation reduces to
\begin{equation*}
\lambda
=(m_1+m_2)(\cos\sigma-1)
=(m_2+m_3)(\cos\sigma-1)
=(m_3+m_1)(\cos\sigma-1),
\end{equation*}
which means that $m_1=m_2=m_3$ because $\cos\sigma\ne 1$. 
\end{proof}

\begin{cor}
There are no
 $LRE$ on $\mathbb{S}^2$ for any repulsive force.
\end{cor}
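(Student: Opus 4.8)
The plan is to derive a contradiction from the explicit formula for $\omega^2$ obtained in the immediately preceding corollary, namely equation \eqref{omega}. Recall the standing sign convention from Section \ref{secEqOfMotion}: a repulsive force means $U'(\cos\sigma_{ij})<0$ for every pair, and by assumption $U'$ keeps a definite nonzero sign throughout $\sigma_{ij}\in(0,\pi)$. First I would observe that any $LRE$ must satisfy the necessary condition \eqref{omega}, since that formula was derived from \eqref{equivalentlagrange} together with the identification $\Psi=\Psi_L$, both of which hold for every $LRE$ by Proposition \ref{prop-eigen} and Theorem \ref{ConditonForLRE}.

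Next I would carry out the sign analysis of the right-hand side of \eqref{omega}. The factor $\sum m_k/(U'(\cos\sigma_{ij}))^2$ is strictly positive, because the masses are positive and each summand is the square of a nonzero quantity. The product $U'(\cos\sigma_{12})U'(\cos\sigma_{23})U'(\cos\sigma_{31})$ is a product of three strictly negative numbers, hence strictly negative. Therefore the right-hand side of \eqref{omega} is strictly negative, forcing $\omega^2<0$, which is impossible for a real angular velocity. This contradiction shows that no repulsive $LRE$ can exist.

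An equivalent and perhaps more transparent route goes through \eqref{equivalentlagrange} directly. By Corollary \ref{cosMustHaveTheSameSign} the three quantities $\cos\theta_k$ share a common sign, so each product $\cos\theta_i\cos\theta_j$ is strictly positive, while the common value $\omega^2/(\sum_k m_k\cos^2\theta_k)$ is nonnegative. Hence each ratio $U'(\cos\sigma_{ij})/(\cos\theta_i\cos\theta_j)$ must be nonnegative, which forces $U'(\cos\sigma_{ij})\ge 0$ and contradicts repulsiveness. I would present the argument via \eqref{omega} as the main proof and perhaps mention this second route as a remark.

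There is essentially no serious obstacle here: the only point requiring care is confirming that \eqref{omega} (equivalently \eqref{equivalentlagrange}) is a genuinely necessary condition valid for all $LRE$, so that no degenerate or putative fixed-point case escapes the argument. This is already guaranteed, since \eqref{conditionForLagrangian2} forces the common value to be nonzero under the standing assumption that $U'$ is nowhere zero; thus there is no fixed-point $LRE$ to treat separately, and the sign contradiction is exhaustive.
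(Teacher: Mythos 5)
Your proposal is correct and follows essentially the same route as the paper, which likewise observes that $U'(\cos\sigma_{ij})<0$ for a repulsive force makes the right-hand side of \eqref{omega} negative, contradicting $\omega^2\ge 0$. Your additional remark via \eqref{equivalentlagrange} is a valid alternative but not needed; the sign argument on \eqref{omega} is exactly the paper's proof.
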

\begin{proof}
Since $U'(\cos \sigma_{ij}) < 0$ for any repulsive force, it can not satisfy 
equation \eqref{omega}.
\end{proof}

\begin{proposition}
There are no fixed point $LRE$.
\end{proposition}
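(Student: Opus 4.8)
The plan is to show that a fixed point (meaning $\omega=0$) is incompatible with the necessary conditions for a $LRE$. The key observation is that a $LRE$ forces $\omega^2$ to be strictly positive, so no $LRE$ can simultaneously be a fixed point. First I would recall that by Definition \ref{fixed-point}, a fixed point satisfies $\dot q=0$, which for a rotating solution $\dot q_k=\Omega\times q_k$ means $\Omega=0$, hence $\omega=0$. The goal is therefore to prove that any configuration satisfying the $LRE$ conditions of Theorem \ref{theCoditionInSigmaAndTheta} must have $\omega\neq 0$.

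The cleanest route is to invoke equation \eqref{omega}, the explicit formula for $\omega^2$ derived in the corollary to Proposition \ref{prop-eigen}:
\begin{equation*}
\omega^2 = U'(\cos \sigma_{12})\,U'(\cos \sigma_{23})\,U'(\cos \sigma_{31}) \left( \sum_{(i,j,k)\in cr(1,2,3)} m_k/(U'(\cos \sigma_{ij}))^2 \right).
\end{equation*}
I would then argue that the right-hand side cannot vanish. By Proposition \ref{propLRE}, condition \eqref{conditionForLagrangian2} guarantees $U'(\cos\sigma_{ij})\neq 0$ for every pair, so each factor $U'(\cos\sigma_{ij})$ in the product is nonzero, and the parenthetical sum is a sum of strictly positive terms (each $m_k>0$ and each denominator $(U'(\cos\sigma_{ij}))^2>0$), hence strictly positive. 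Therefore $\omega^2\neq 0$, so $\omega\neq 0$, and the $LRE$ cannot be a fixed point.

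Alternatively, and perhaps more transparently, I could argue directly from \eqref{equivalentlagrange}: the common ratio equals $\omega^2/\sum_k m_k\cos^2\theta_k$, and since the numerators $U'(\cos\sigma_{ij})$ are nonzero while $\cos\theta_k\neq 0$ for a genuine $LRE$, the left-hand fractions are nonzero; hence $\omega^2$ itself must be nonzero. This avoids even writing out the product formula and relies only on the chain of equalities already established. I would probably present this as the main line, mentioning \eqref{omega} as confirmation.

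I do not anticipate a serious obstacle here; the statement is essentially a bookkeeping corollary of the structure already built up, and the only thing to be careful about is citing the correct earlier results that rule out the degenerate possibilities (namely that $U'(\cos\sigma_{ij})\neq 0$ and $\cos\theta_k\neq 0$, both guaranteed by Proposition \ref{propLRE} and Theorem \ref{theCoditionInSigmaAndTheta}). The subtle point worth stating explicitly is why $U'\neq 0$ is automatic: it is \emph{not} an extra hypothesis on the potential but a consequence of \eqref{conditionForLagrangian2}, where the common value was shown to be nonzero. Once that is noted, the proof is a one-line deduction that $\omega^2$ is a nonzero product of nonzero quantities.
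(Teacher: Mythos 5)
Your proposal is correct and matches the paper's proof, which is exactly the one-line observation that equation \eqref{omega} forces $\omega\ne 0$ whenever $U'(\cos\sigma_{ij})\ne 0$. Your alternative route via \eqref{equivalentlagrange} is an equivalent rephrasing rather than a genuinely different argument, so there is nothing further to add.
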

\begin{proof}
By (\ref{omega}),
$\omega\ne 0$ for $U'(\cos\sigma_{ij})\ne 0$.
\end{proof}


\section{Equal masses $3$--body $RE$ on $\mathbb{S}^2$ under the
cotangent potential}\label{sec:curved}
The three body problem on $\mathbb{S}^2$ under the cotangent potential
is the natural extension of the planar Newtonian three body problem to the sphere \cite{Borisov2, Diacu1}.

The cotangent potential and its derivative are given by \cite{Diacu-EPC1}
\begin{equation*}
U(\cos\sigma_{ij})
=\frac{\cos\sigma_{ij}}{\sqrt{1-\cos^2(\sigma_{ij})}},
\qquad
U'(\cos\sigma_{ij})=\frac{1}{\sin^3(\sigma_{ij})}.
\end{equation*}

Since $U'(\cos\sigma_{ij})>0$, this potential produces the attractive force
among the masses.
In this section we only consider the equal masses case $m_k=m$.

The reason why we decided to include this problem in the paper is by one hand, to show how our method is easy to apply and allow us to find new families of relative equilibria. By the other hand, since this problem has been widely studied in the last years (see for instance \cite{Diacu1, Diacu3, Diacu-EPC1, tibboel} and the references therein), 
we can compare the results that we obtain  with the results obtained by other authors using different approaches. In a forthcoming 
paper we will present several new families of relative equilibria for the general problem, for now we only show briefly the equal masses case.

\subsection{Euler relative equilibria}
Equal masses $ERE$ are completely determined by our method.
We will show that there are scalene and isosceles $ERE$.

Since we extended the range of $\theta_k$ to $-\pi\le \theta_k \le \pi$, $U'$ should be written by $U'(\cos\theta_{ij})=1/|\sin^3\theta_{ij}|$.
We write $a=\theta_{21}$ and $x=\theta_{31}$.
Without loss of generality, we can restrict $0<a<\pi$ and $-\pi<x<\pi$.
The discriminant is $D=A^2=3+2(\cos(2a)+\cos(2x)+\cos(2(x-a)))$.

\subsubsection{The degenerated case}
The degenerated case $A=0$ is realized by 
$\cos(2a)=\cos(2x)=-1/2$ and $\sin(2a)+\sin(2x)=0$.
The solutions are the following.
\begin{enumerate}
\item The equilateral triangle: $a=2\pi/3$ and $x=-2\pi/3$.
\item Isosceles triangles with equal angles $\pi/3$. 
Namely, $(a,x)=(\pi/3,2\pi/3)$, $(\pi/3,-\pi/3)$, and $(2\pi/3,\pi/3)$.
\end{enumerate}

For the first shape, the equations of motion are
satisfied if and only if $\omega=0$.
Namely, this shape describes a fixed pint.

For the second shapes, the equations of motion determine
uniquely $\theta_k$ and $\omega$.
For example, for $(a,x)=(2\pi/3,\pi/3)$,
the equations of motion determine
$\theta_3=0$, $\theta_2=-\theta_1=\pi/3$, and $\omega^2=16/(3\sqrt{3})$.

\subsubsection{Non-degenerated case}
For the case $A\ne0$, the condition for the shape to form an
$ERE$ is
\begin{equation*}
\begin{split}
det=&\frac{g}{(\sin x |\sin x|)(\sin a |\sin a|)(\sin(x-a) |\sin(x-a)|)}=0, \quad \text{where}\\
g=
&\sin(x)|\sin(x)|\Big(\sin(2x)+\sin(2a)\Big)
	\Big(\sin(x-a)|\sin(x-a)|-\sin a |\sin a|\Big)\\
&-\sin(x-a)|\sin(x-a)|\Big(\sin(2a)-\sin(2(x-a))\Big)
	\Big(\sin a |\sin a|+\sin x |\sin x|\Big).
\end{split}
\end{equation*}

\begin{figure}
   \centering
   \includegraphics[width=8cm]{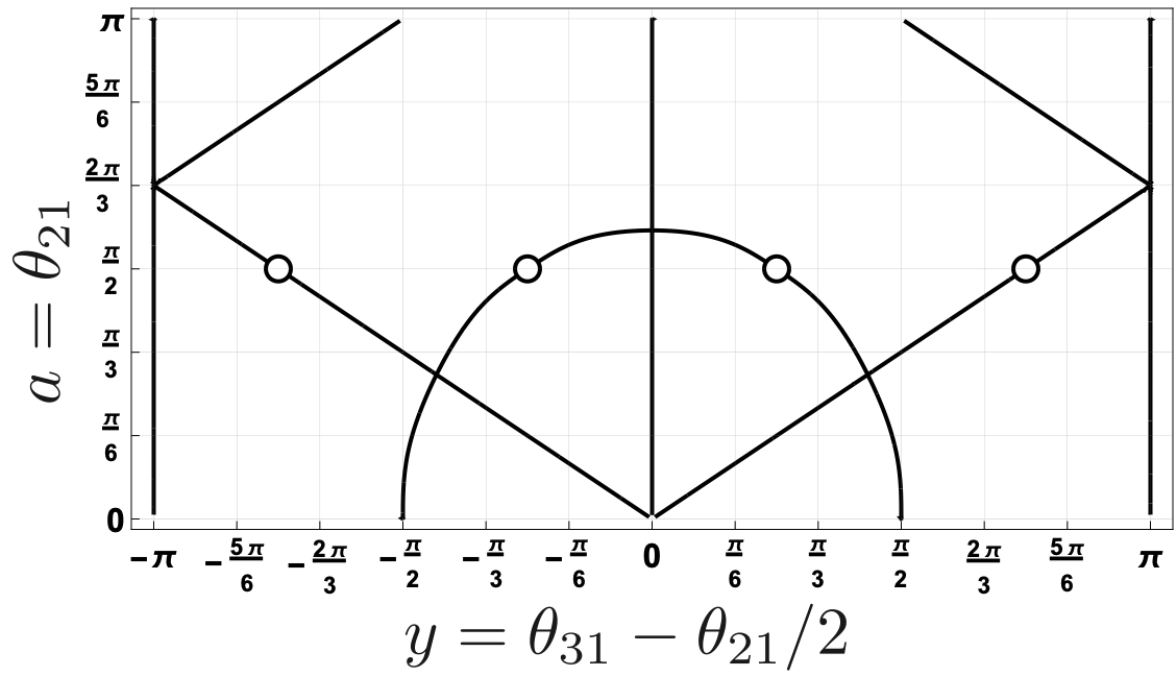}
   \caption{The contour for $det=0$.
   The vertical and horizontal axes represent $a=\theta_{21}$ and
   $y=\theta_{31}-\theta_{21}/2=x-a/2$ respectively.
   Two vertical lines $y=\pm \pi$ represent the same line.
   The curve and the straight lines represent
   scalene and isosceles triangle $ERE$ respectively.
   The four points 
   $(y,a)=(\pm\pi/4,\pi/2), (\pm 3\pi/4,\pi/2)$ are excluded.
   The point $(y,a)=(\pm \pi,2\pi/3)$ represents the equilateral triangle 
   which is the fixed point.}
   \label{figDet}
\end{figure}
\begin{figure}
   \centering
   \includegraphics[width=3.5cm]{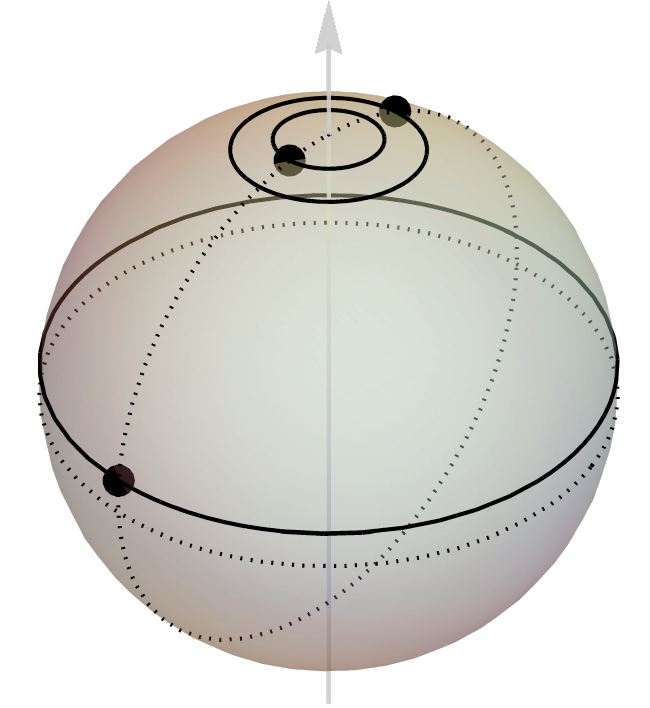}
   \includegraphics[width=3.5cm]{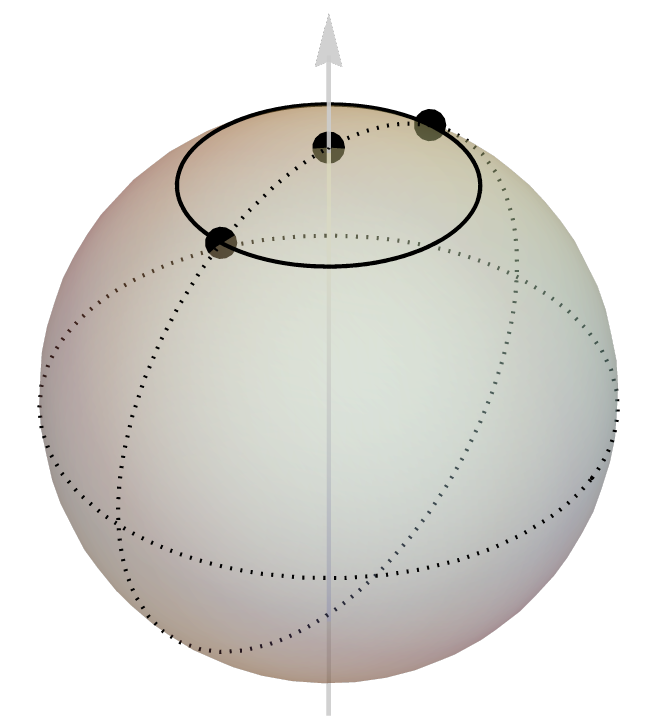}
   \includegraphics[width=3.5cm]{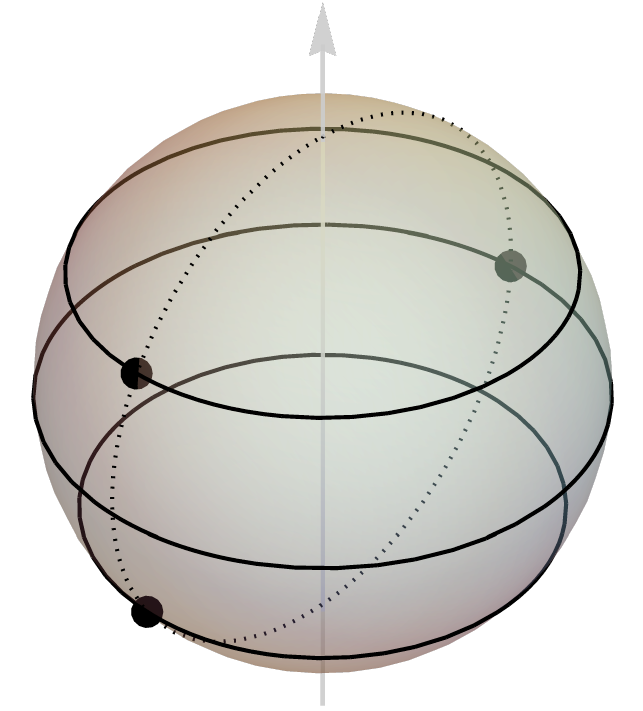}
   \caption{Three typical $ERE$ on a rotating meridean.
   Left: A scalene configuration, where three arc angles between bodies are different.
   Middle: An isosceles configuration with two equal arc angles $\theta$ are smaller than $2\pi/3$ and $\theta\ne \pi/2$, 
   in this case the middle mass must be at one of the poles.
   Right: An isosceles configuration with $2\pi/3<\theta < \pi$, 
   in this case the middle mass must rotates on the equator.}
   \label{Eulerian-scalene}
\end{figure}
The contour $det=0$ is shown in Figure \ref{figDet}.
The curve represents the scalene triangle $ERE$ where all $|\theta_{ij}|$ are different,
and the straight lines represent the isosceles triangle $ERE$.
The equations $det=0$ and $g=0$ are invariant for
the exchange $a \leftrightarrow x$,
or $a\to -a$ and $x\to x-a$.
These invariances are induced by the exchange of masses.

For the scalene triangle, the curve in the region $-a/2<y=x-a/2<a/2$ is represented by
\begin{equation}
\label{solOfCos2y}
\cos(2y)
=\cos(a)+\frac{\sin^2(a)}{\cos(a)}\left(
	\cos(2a)
	+\sqrt{\cos^2(2a)-4\cos(2a)-4}
	\right).
\end{equation}

Let $a_\ell$ be the largest angle of $|\theta_{ij}|$.
Numerical calculations suggest that
the scalene arc in the Figure \ref{Eulerian-scalene} is convex.
So, the curve for scalene triangle appears when $a_\ell$ satisfies
$\pi/2<a_\ell<a_c=1.8124...$,
where $a_c$ is the solution of equation \eqref{solOfCos2y} for $y=0$.
Namely,
\begin{equation*}
\cos(a_c)
=-1+2^{-1}\left(
				\left(1+9^{-1}\sqrt{78}\right)^{1/3}
				+
				\left(1-9^{-1}\sqrt{78}\right)^{1/3}
				\right).
\end{equation*}
Since $\pi/2<a_\ell$, 
the longest arc length $Ra_\ell$ goes to infinity as $R\to \infty$.
Therefore, the scalene triangle $ERE$ has no  Euclidean limit.

We can solve explicitly the scalene $ERE$ in $-a/2<y<a/2$ for given $\cos a$
using \eqref{solOfCos2y} and usual trigonometric computations.
The solutions for the other regions are given by the symmetry mentioned above.

For the isosceles triangles, we take $m_3$ at the mid point between $m_1$ and $m_2$ with $\theta=\theta_2-\theta_3=\theta_3-\theta_1$, $0<\theta<\pi$.
We can show that
\begin{itemize}
\item[i)] $\theta_3=0 \mod(\pi)$,
$\omega^2=+f(\theta)$ \quad for $\theta\in (0,2\pi/3) 
\setminus
\{\pi/2\}$,
\item[ii)]  $\theta_3$ is arbitrary and $\omega^2=0$ \quad for $\theta=2\pi/3$,
\item[iii)] $\theta_3=\pi/2 \mod(\pi)$,
$\omega^2=-f(\theta)$ \quad for 
$\theta\in (2\pi/3,\pi)$.
\end{itemize}
Where
$f(\theta)=2\left(1/|\sin(2\theta)|^3+1/\big(\sin^2(\theta)\sin(2\theta)\big)\right)$.
See Figure \ref{Eulerian-scalene}.

\subsection{Lagrange relative equilibria}

Finally we consider  $LRE$ for equal masses case.
The condition for the shape $\sigma_{ij}$ to form a $LRE$,
$J\Psi_L=\lambda\Psi_L$ is
\begin{equation}\label{eqForEqualMassLRE}
2-\lambda/m
=\frac{\cos(\sigma_{jk})\sin^3(\sigma_{ki})+\sin^3(\sigma_{jk})\cos(\sigma_{ki})}{\sin^3(\sigma_{ij})},
\end{equation}
for $(i,j,k)\in cr(1,2,3)$.

Our numerical calculations suggest that
there are no scalene triangles solutions for this equation.
However, further investigations are needed to prove this statement.

Let us proceed to the analysis of the isosceles triangles $LRE$.
Let be $\sigma_{23}=\sigma_{31}=\sigma$. 
Then, equation \eqref{eqForEqualMassLRE} is reduced to
\begin{equation}
q(\sigma,\sigma_{12})
=\cos\sigma
\big(2\sin^6(\sigma)-\sin^6(\sigma_{12})\big)
-\sin^3(\sigma)\cos(\sigma_{12})\sin^3(\sigma_{12})
=0.
\end{equation}
Obviously,
equilateral triangles $\sigma=\sigma_{12}$
satisfy $q=0$.
The graphical representation of this equation is
shown in Figure \ref{figMspecialCase8equalMassesCaseCont3FigSolution}.
\begin{figure}
   \centering
   \includegraphics[width=7cm]{./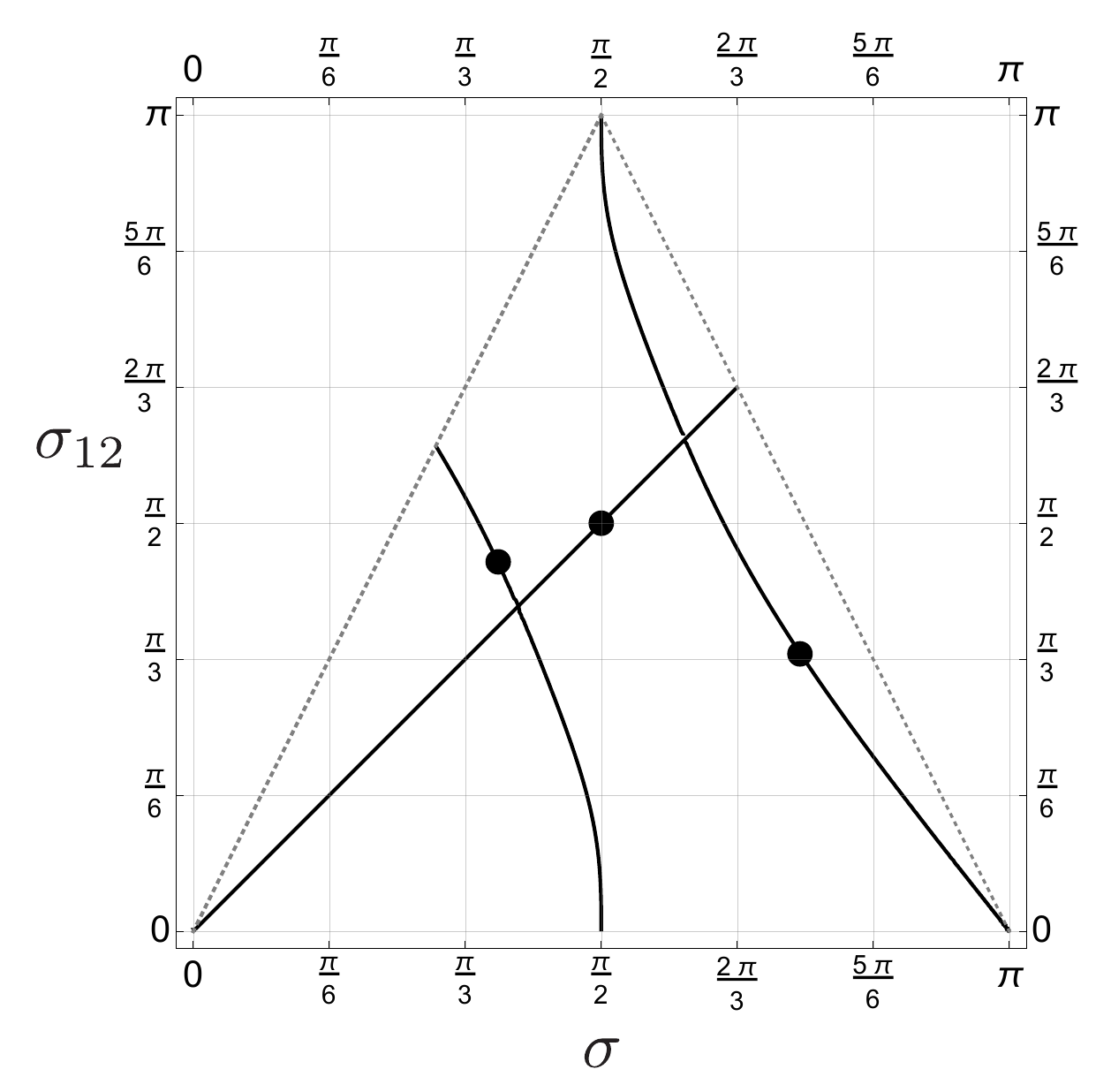} 
   \caption{The solid line represents the shape for the equal masses isosceles 
   $LRE$
   with $\sigma_{12}$ and $\sigma=\sigma_{23}=\sigma_{31}$.
   The straight line represents
   equilateral triangle.
   The region inside the 
  dotted lines is 
   the region to form a triangle, 
   $\sigma_{12}<2\sigma<2\pi-\sigma_{12}$.
   Note that the curve is point symmetric
   around $(\sigma,\sigma_{12})=(\pi/2,\pi/2)$.
   The three  black circles represent
   the right-angled triangles,
   where the angle at the vertex
   $m_3$ is $\pi/2$.
   }
   \label{figMspecialCase8equalMassesCaseCont3FigSolution}
\end{figure}

\begin{figure}
   \centering
\includegraphics[width=3.5cm]{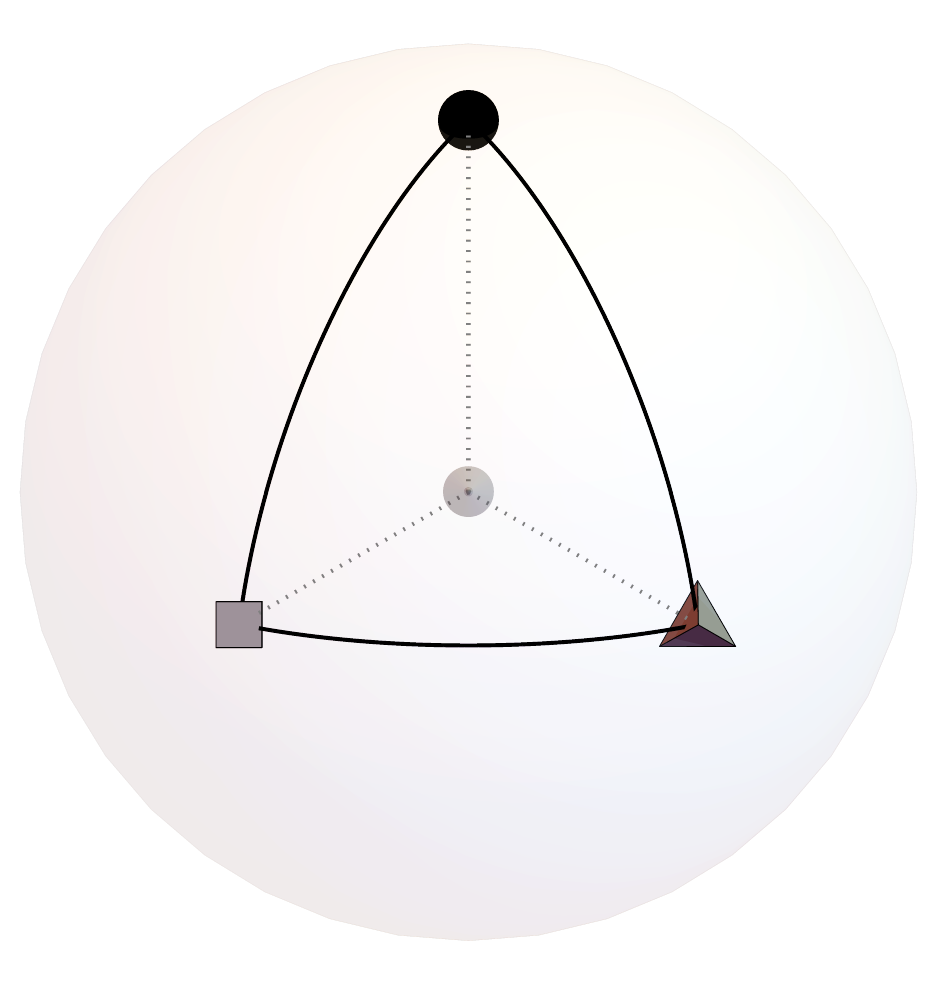}
\includegraphics[width=3.5cm]{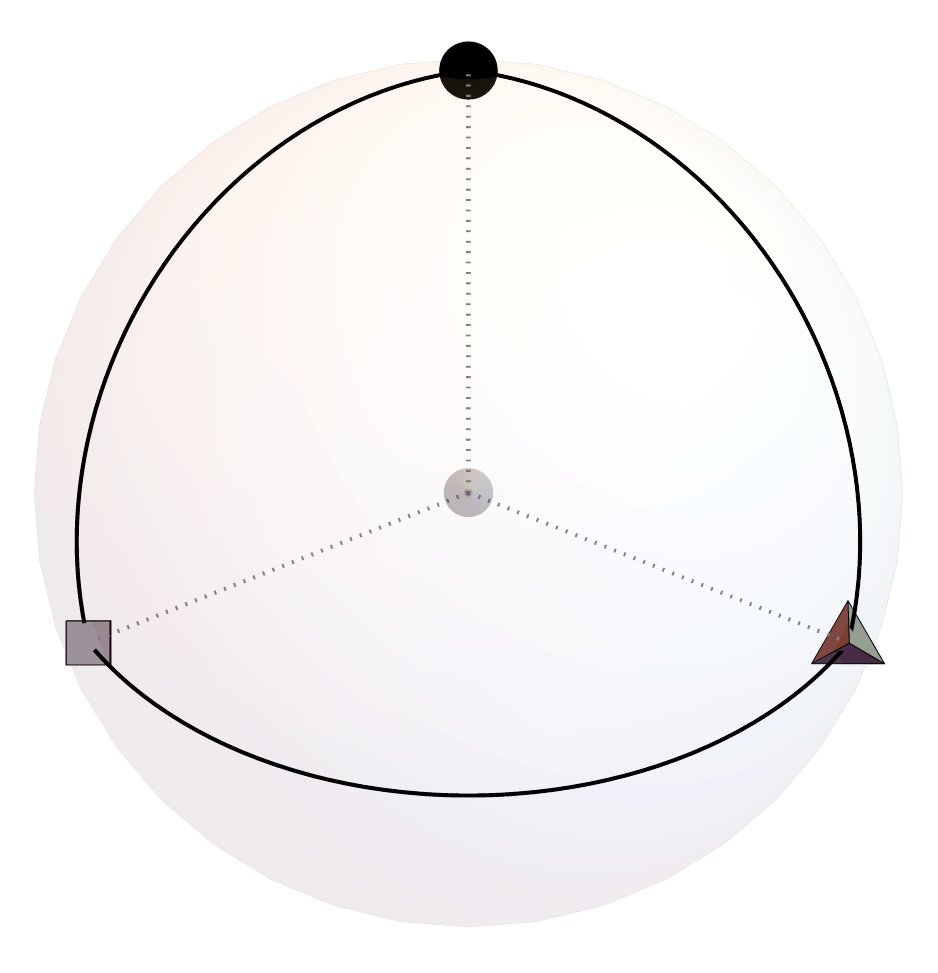}
   \caption{
   An example for  pair of equal mass isosceles $LRE$
   predicted by the symmetry
   $q(\sigma,\sigma_{12})=0 \Leftrightarrow q(\pi-\sigma,\pi-\sigma_{12})=0$
   seen from above the North pole.
   The left is $\sigma_{12}=\pi/3$ and $\sigma_{23}=\sigma_{31}=1.33240...$.
   The right is $\sigma_{12}=2\pi/3$ and 
   $\sigma_{23}=\sigma_{31}=\pi-1.33240...=1.80918...$.
   The grey ball at the centre represents the North pole.
   They have the common angular velocity $\omega^2=3.85072...$.}
   \label{figisoscelesSigma12EqPiDiv3and2PiDiv3}
\end{figure}
It is obvious that if $(\sigma, \sigma_{12})$ is a solution of 
$q(\sigma,\sigma_{12})=0$,
then $(\pi-\sigma, \pi-\sigma_{12})$ is also a solution.
Namely, $q(\sigma,\sigma_{12})=0$ is point symmetric
around $(\sigma,\sigma_{12})=(\pi/2,\pi/2)$.
Since $U'(\cos\sigma_{ij})=1/\sin^3(\sigma_{ij})$,
the angular velocity $\omega^2$ given by \eqref{omega} is invariant by this symmetry.
See Figure \ref{figisoscelesSigma12EqPiDiv3and2PiDiv3}.

In Figure \ref{figIsoscelesSigma12EqPiDiv6} we show three isosceles $LRE$ with $\sigma_{12}=\pi/6$.

 \begin{figure}
    \centering
\includegraphics[width=3cm]{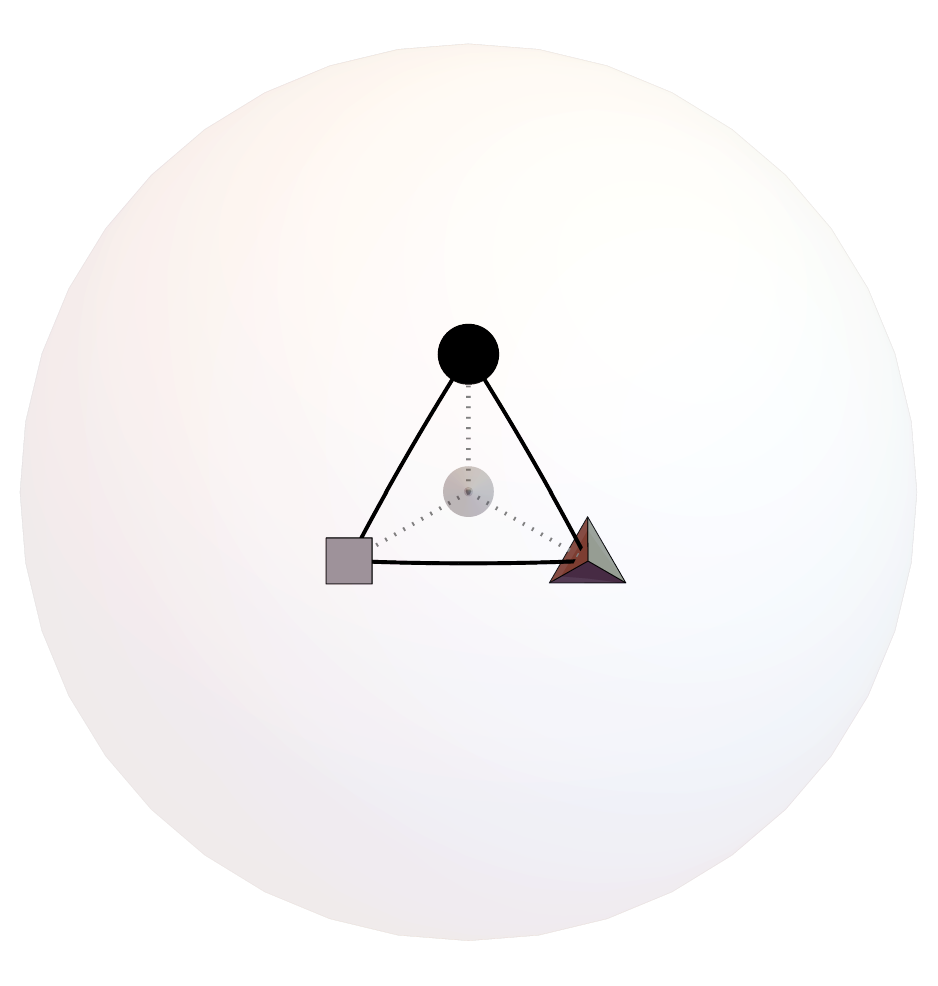}
\includegraphics[width=3cm]{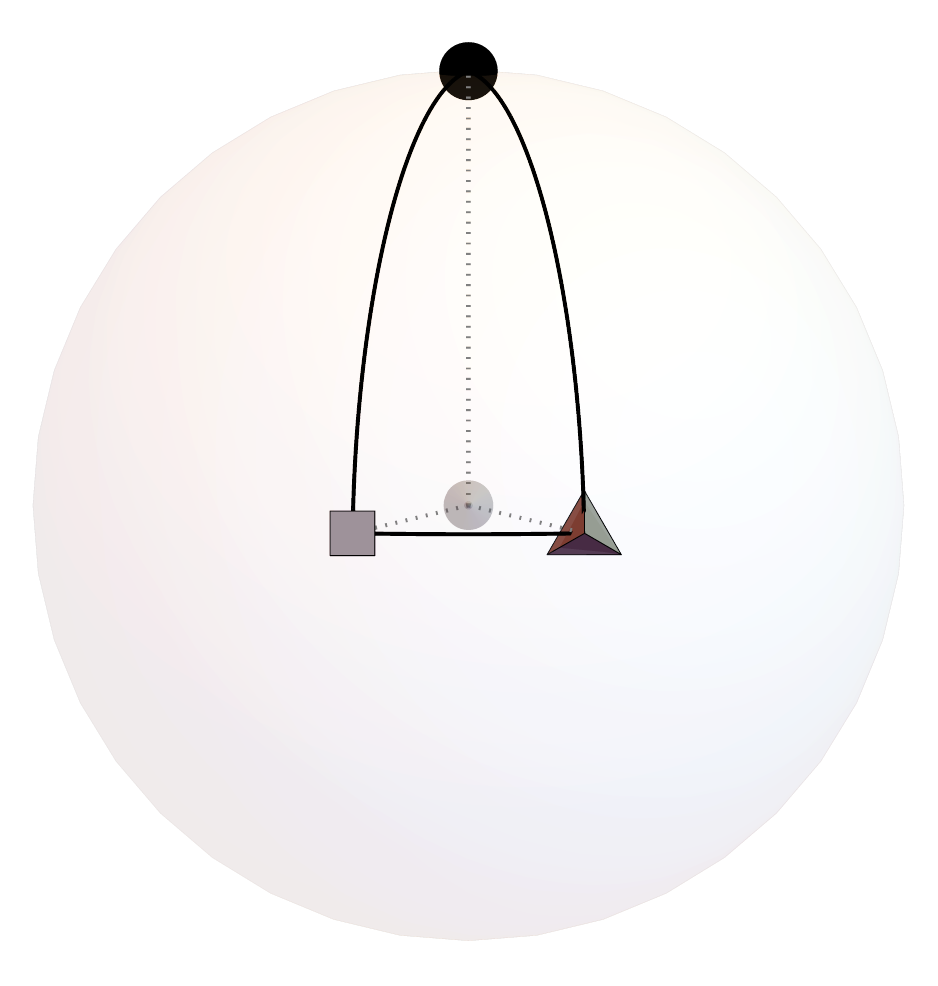}
\includegraphics[width=3cm]{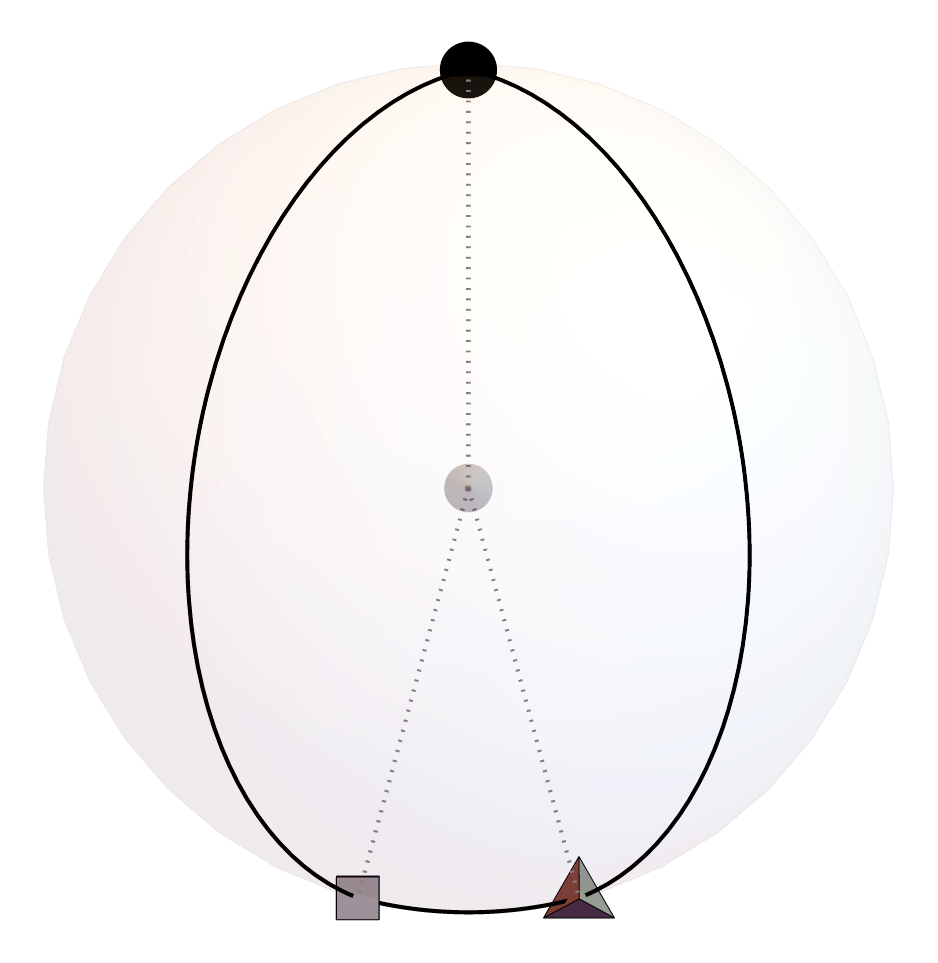} 
    \caption{Three isosceles Lagrangian $RE$ configurations
    with $\sigma_{12}=\pi/6$
    for equal masses,
    seen from above the North pole.
    From left to 
    right
    $\sigma_{23}=\sigma_{31}=
    \pi/6$ (equilateral triangle), $1.51596...\leq \pi/2 $, 
    and $2.73083...\geq 5\pi/6$.}
    \label{figIsoscelesSigma12EqPiDiv6}
 \end{figure}

\section{Conclusions and final remarks}\label{conclusions}
We successfully derive the conditions for general shapes of 
three bodies on the sphere $\mathbb{S}^2$ to generate a relative equilibria. 

It is well known that
the rotation axis for $RE$ is one of the eigenvectors
of the inertia tensor.
We divide the analysis of relative equilibria on the sphere into two big classes, collinear ($ERE$) and non-collinear ($LRE$). 
In both cases, for $n=3$, we give the necessary and sufficient conditions to obtain a $RE$ for $\{m_k,\cos\sigma_{ij}\}$.

We introduced the 3 by 3 matrix $J$ which is similar to the inertia tensor $I$.
The usefulness of $J$ for the investigation of $LRE$ should be remarked.

Without $J$, first we must diagonalise $I$
then calculate $\theta_k$ for each eigenvector.
The characteristic equation to obtain the eigenvalues is of third order. Then, we must find the eigenvector for each eigenvalue, and
finally, we must verify if the corresponding configuration satisfy the equations of motion or not. This procedure, if we manage to finish it, can be long and tedious.

Instead of doing that, by using the matrix $J$ that we found, the problem to obtain $LRE$
is reduced to solve the eigenvalue problem,
$J\Psi_L=\lambda\Psi_L$,
which directly gives the condition 
for $\{m_k,\cos\sigma_{ij}\}$
to form a $LRE$.
Thus, by using this 
condition, 
we give a complete answer to the question
described in the introduction.

To show how the condition 
works to determine $RE$ on $\mathbb{S}^2$,
we study the equal masses case for the cotangent potential.
The $ERE$ are completely determined, and the $LRE$ are almost.
The remaining problem is whether scalene $LRE$ exist or not.
The answer will be published later in a forthcoming paper,
with our results of investigation of $LRE$ for general masses.

So, the necessary and sufficient condition $J\Psi_L=\lambda\Psi_L$
for $\{m_k, \cos\sigma_{ij}\}$ works very fine.

The structure of the search of this condition
is the following: We derive Theorems 
\ref{threeEquivalentStatements}, 
\ref{PsitoEz}
and \ref{theCoditionInSigmaAndTheta}, each one describes 
the condition in terms of $m_k$, $\cos\sigma_{ij}$
and $\cos\theta_k$.
Theorems \ref{threeEquivalentStatements}, and 
\ref{PsitoEz} 
come from the considerations for
the conservation of the angular momentum and the inertia tensor,
Theorem \ref{theCoditionInSigmaAndTheta} comes directly from  the equations of motion.
Eliminating 
$\cos\theta_k$ 
from the above theorems,
we finally obtain the condition
in  Theorem \ref{ConditonForLRE}.

The last comment
is for the analysis of the $ERE$ 
on a rotating meridian
for general masses on the sphere.
The condition $det=0$
is a natural extension of  the Euler's fifth order equation.
For the cotangent potential, we have

\begin{equation*}
\begin{split}
&det=\frac{m_1m_2m_3 \,g}
{\sin\theta_{12}\sin\theta_{23}\sin\theta_{31}
|\sin\theta_{12}\sin\theta_{23}\sin\theta_{31}|},
\mbox{ where}\\
&g=\sum m_k \sin\theta_{ij}|\sin\theta_{ij}|
\big(\sin\theta_{ki}|\sin\theta_{ki}|\sin(2\theta_{ki})
-\sin\theta_{jk}|\sin\theta_{jk}|\sin(2\theta_{jk})
\big),
\end{split}
\end{equation*}
the 
sum runs for $(i,j,k)\in cr(1,2,3)$.
Note that $g=0$ is a fifth order equation
for $\sin\theta$.
We can easily verify that 
 limit $R\to \infty$ with $R\, |\theta_{ij}|=r_{ij}$ fixed,
yields the Euler's fifth order 
equation for $r_{ij}$.

\subsection*{Acknowledgements}
Thanks to our friend Florin Diacu, 
who was the inspiration of this work.
The second author (EPC) has been partially supported 
by Asociaci\'on Mexicana de Cultura A.C. and Conacyt-M\'exico Project A1S10112.

\end{document}